\date{}
\begin{document}

\centerline{}

\centerline {\Large{\bf  Generalized fusion frame in tensor product of}}

\centerline{\Large{\bf Hilbert spaces}}

\newcommand{\mvec}[1]{\mbox{\bfseries\itshape #1}}
\centerline{}
\centerline{\textbf{Prasenjit Ghosh}}
\centerline{Department of Pure Mathematics, University of Calcutta,}
\centerline{35, Ballygunge Circular Road, Kolkata, 700019, West Bengal, India}
\centerline{e-mail: prasenjitpuremath@gmail.com}
\centerline{}
\centerline{\textbf{T. K. Samanta}}
\centerline{Department of Mathematics, Uluberia College,}
\centerline{Uluberia, Howrah, 711315,  West Bengal, India}
\centerline{e-mail: mumpu$_{-}$tapas5@yahoo.co.in}

\newtheorem{Theorem}{\quad Theorem}[section]

\newtheorem{definition}[Theorem]{\quad Definition}

\newtheorem{theorem}[Theorem]{\quad Theorem}

\newtheorem{remark}[Theorem]{\quad Remark}

\newtheorem{corollary}[Theorem]{\quad Corollary}

\newtheorem{note}[Theorem]{\quad Note}

\newtheorem{lemma}[Theorem]{\quad Lemma}

\newtheorem{example}[Theorem]{\quad Example}

\newtheorem{result}[Theorem]{\quad Result}
\newtheorem{conclusion}[Theorem]{\quad Conclusion}

\newtheorem{proposition}[Theorem]{\quad Proposition}

\begin{abstract}
\textbf{\emph{Generalized fusion frame and some of their properties in tensor product of Hilbert spaces are described.\,Also, the canonical dual g-fusion frame in tensor product of Hilbert spaces is considered.\,Finally, the frame operator for a pair of $g$-fusion Bessel sequences in tensor product of Hilbert spaces is presented.}}
\end{abstract}
{\bf Keywords:}  \emph{Frame, fusion frame, $g$-frame, $g$-fusion frame, frame operator,\\ \smallskip\hspace{2.2 cm}Tensor product of Hilbert spaces, Tensor product of frames.}

{\bf 2000 Mathematics Subject Classification:} \emph{42C15; 46C07; 46M05; 47A80.}

\section{Introduction}

Frame for Hilbert space was first introduced by Duffin and Schaeffer \cite{Duffin} in 1952 to study some fundamental problems in non-harmonic Fourier series.\;The formal definition of frame in the abstract Hilbert spaces was given by Daubechies et al.\,\cite{Daubechies} in 1986.\;Frame theory has been widely used in signal and image processing, filter bank theory, coding and communications, system modeling and so on.\;Several generalizations of frames  namely, \,$K$-frames, \,$g$-frames, fusion frames, \,$g$-fusion frames etc.\;had been introduced in recent times.\;Sun \cite{Sun}\, introduced \,$g$-frame and \,$g$-Riesz basis in complex Hilbert spaces and discussed several properties of them.\;P.\,Casazza and G.\,Kutyniok \cite{Kutyniok} were first to introduced the notion of fusion frames.\;P.\,Gavruta \cite{Gavruta} discussed the duality of fusion frames and defined the frame operator for pair of fusion Bessel sequences in Hilbert spaces.\;Generalized fusion frames in Hilbert spaces were presented by Sadri et al.\,\cite{Ahmadi} to generalize the theory of fusion frame and \,$g$-frame.\,Generalized atomic subspaces for operators in Hilbert spaces were studied by P.\,Ghosh and T.\,K.\,Samanta {\cite{Ghosh}} and they were also presented the stability of dual \,$g$-fusion frames in Hilbert spaces in {\cite{P}}.

The basic concepts of tensor product of Hilbert spaces were presented by S.\,Rabin-son \cite{S}.\;Frames and Bases in Tensor Product of Hilbert spaces were introduced by A.\,Khosravi and M.\,S.\,Asgari \cite{A}.\;Reddy et al.\,\cite{Upender} also studied the frame in tensor product of Hilbert spaces and presented the tensor frame operator on tensor product of Hilbert spaces.\;The concepts of fusion frames and \,$g$-frames in tensor product of Hilbert spaces were introduced by Amir Khosravi and M.\,Mirzaee Azandaryani \cite{Mir}.  

In this paper, generalized fusion frame or \,$g$-fusion frame in tensor product of Hilbert spaces is presented and some of their properties are going to be established.\,The canonical dual\;$g$-fusion frame in tensor product of Hilbert spaces is also discussed.\;At the end, the relation between the frame operators for the pair of \,$g$-fusion Bessel sequences in Hilbert spaces and their tensor product are obtained.

Throughout this paper,\;$H \;\text{and}\; K$\, are considered to be separable Hilbert spaces with associated inner products \,$\left <\,\cdot \,,\, \cdot\,\right>_{1} \;\text{and}\; \left <\,\cdot \,,\, \cdot\,\right>_{2}$\,.\,$I_{H}$\, and \,$I_{K}$\, denotes the identity operators on \,$H$\, and \,$K$, respectively.\;$\mathcal{B}\,(\,H \,,\, K\,)$\; is the collection of all bounded linear operators from \,$H \;\text{to}\; K$.\;In particular \,$\mathcal{B}\,(\,H\,)$\; denote the space of all bounded linear operators on \,$H$.\,$P_{\,V}$\, denote the orthogonal projection onto the closed subspace \,$V \,\subset\, H$.\;$\left\{\,V_{i}\,\right\}_{ i \,\in\, I}$\, and \,$\left\{\,W_{j}\,\right\}_{ j \,\in\, J}$\, are the collections of closed subspaces of \,$H \;\text{and}\; K$, where \,$I,\, J$\, are subsets of  integers \,$\mathbb{Z}$.\;$\left\{\,H_{i}\,\right\}_{ i \,\in\, I}$\, and \,$\left\{\,K_{j}\,\right\}_{ j \,\in\, J}$\, are the collections of Hilbert spaces.\,$\left\{\,\Lambda_{i} \,\in\, \mathcal{B}\,(\,H \,,\, H_{i}\,)\,\right\}_{i \,\in\, I}$\, and \,$\left\{\,\Gamma_{j} \,\in\, \mathcal{B}\,(\,K \,,\, K_{j}\,)\,\right\}_{j \,\in\, J}$\, denotes the sequences of operators.\;Define the space
\[l^{\,2}\left(\,\left\{\,H_{i}\,\right\}_{ i \,\in\, I}\,\right) \,=\, \left \{\,\{\,f_{\,i}\,\}_{i \,\in\, I} \,:\, f_{\,i} \;\in\; H_{i},\; \sum\limits_{\,i \,\in\, I}\, \left \|\,f_{\,i}\,\right \|^{\,2} \,<\, \infty \,\right\}\]
with inner product is given by \,$\left<\,\{\,f_{\,i}\,\}_{i \,\in\, I} \,,\, \{\,g_{\,i}\,\}_{ i \,\in\, I}\,\right> \;=\; \sum\limits_{\,i \,\in\, I}\, \left<\,f_{\,i} \,,\, g_{\,i}\,\right>_{H_{i}}$.\;Clearly \,$l^{\,2}\left(\,\left\{\,H_{i}\,\right\}_{ i \,\in\, I}\,\right)$\; is a Hilbert space with respect to the above inner product \cite{Ahmadi}.\,Similarly, we can define the space \,$l^{\,2}\left(\,\left\{\,K_{j}\,\right\}_{ j \,\in\, J}\,\right)$.

\section{Preliminaries}

\smallskip\hspace{.6 cm}

\begin{theorem}\cite{Gavruta}\label{th0.001}
Let \,$V \,\subset\, H$\; be a closed subspace and \,$T \,\in\, \mathcal{B}\,(\,H\,)$.\;Then \,$P_{\,V}\, T^{\,\ast} \,=\, P_{\,V}\,T^{\,\ast}\, P_{\,\overline{T\,V}}$.\;If \,$T$\; is an unitary operator (\,i\,.\,e., \,$T^{\,\ast}\, T \,=\, I_{H}$\,), then \,$P_{\,\overline{T\,V}}\;T \,=\, T\,P_{\,V}$.
\end{theorem}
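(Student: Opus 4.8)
The plan is to prove both assertions directly from the defining property of an orthogonal projection: for a closed subspace $M \subseteq H$ and $g \in H$, the vector $P_M g$ is the unique element of $M$ with $g - P_M g \perp M$. Everything then reduces to computing a few inner products and invoking the continuity of the inner product to pass from $TV$ to its closure $\overline{TV}$.

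For the first identity, I would fix $h \in H$ and aim to show that $P_V T^{\,\ast}(I_H - P_{\overline{TV}})h = 0$; adding $P_V T^{\,\ast} P_{\overline{TV}}h$ to both sides then yields $P_V T^{\,\ast}h = P_V T^{\,\ast} P_{\overline{TV}}h$, and since $h$ is arbitrary the operator identity follows. Set $g = (I_H - P_{\overline{TV}})h$, so that $g \perp \overline{TV}$, and in particular $g \perp Tv$ for every $v \in V$. Then for each $v \in V$ we have $\langle T^{\,\ast}g, v\rangle = \langle g, Tv\rangle = 0$, so $T^{\,\ast}g \perp V$, whence $P_V T^{\,\ast}g = 0$, as required. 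I note that this half uses only $T \in \mathcal{B}(H)$ and no isometry hypothesis.

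For the second identity, assume $T^{\,\ast}T = I_H$ and fix $h \in H$. Decompose $h = P_V h + (I_H - P_V)h$ and apply $T$, giving $Th = T P_V h + T(I_H - P_V)h$. Here $T P_V h \in TV \subseteq \overline{TV}$, while for any $v \in V$ we compute $\langle T(I_H - P_V)h, Tv\rangle = \langle T^{\,\ast}T(I_H - P_V)h, v\rangle = \langle (I_H - P_V)h, v\rangle = 0$; hence $T(I_H - P_V)h \perp TV$, and by continuity of the inner product $T(I_H - P_V)h \perp \overline{TV}$. Therefore $T P_V h$ is precisely the orthogonal projection of $Th$ onto $\overline{TV}$, i.e.\ $P_{\overline{TV}}Th = T P_V h$, and since $h$ was arbitrary, $P_{\overline{TV}}T = T P_V$.

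The whole argument is essentially bookkeeping with orthogonal complements; the only point requiring a little attention is the passage from orthogonality against $TV$ to orthogonality against $\overline{TV}$, which is immediate from the continuity of $\langle\,\cdot\,,\,\cdot\,\rangle$. I do not anticipate any genuine obstacle beyond keeping that step explicit.
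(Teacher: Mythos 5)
Your argument is correct: the first identity follows exactly as you say from $\langle T^{\,\ast}(I_H-P_{\overline{TV}})h , v\rangle = \langle (I_H-P_{\overline{TV}})h , Tv\rangle = 0$ for $v\in V$, and the second from the characterization of $P_{\overline{TV}}Th$ via the decomposition $Th = TP_Vh + T(I_H-P_V)h$, using only $T^{\,\ast}T=I_H$. The paper itself states this theorem as a cited preliminary from Gavruta and gives no proof, and your argument is the standard one found there, so there is nothing to compare beyond noting that your proof is complete and correct.
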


\begin{theorem}\cite{Jain}\label{th0.01}
The set \,$\mathcal{S}\,(\,H\,)$\; of all self-adjoint operators on \,$H$\; is a partially ordered set with respect to the partial order \,$\leq$\, which is defined as for \,$T,\,S \,\in\, \mathcal{S}\,(\,H\,)$ 
\[T \,\leq\, S \,\Leftrightarrow\, \left<\,T\,f \,,\, f\,\right> \,\leq\, \left<\,S\,f \,,\, f\,\right>\; \;\forall\; f \,\in\, H.\] 
\end{theorem}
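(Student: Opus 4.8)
The plan is simply to verify the three axioms of a partial order for the relation \,$\leq$\, on \,$\mathcal{S}\,(\,H\,)$, after first checking that the relation is even meaningfully stated. For \,$T \,\in\, \mathcal{S}\,(\,H\,)$\, and \,$f \,\in\, H$\, one has \,$\langle\, T\,f \,,\, f\,\rangle \,=\, \langle\, f \,,\, T\,f\,\rangle \,=\, \overline{\langle\, T\,f \,,\, f\,\rangle}$\,, so \,$\langle\, T\,f \,,\, f\,\rangle$\, is real; hence each inequality \,$\langle\, T\,f \,,\, f\,\rangle \,\leq\, \langle\, S\,f \,,\, f\,\rangle$\, is a comparison of real numbers and the definition is legitimate.

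Reflexivity (\,$T \,\leq\, T$\,) and transitivity (if \,$T \,\leq\, S$\, and \,$S \,\leq\, R$\, then \,$T \,\leq\, R$\,) are immediate, being inherited pointwise in \,$f$\, from the usual order on \,$\mathbb{R}$. The only step requiring an argument is antisymmetry: assuming \,$T \,\leq\, S$\, and \,$S \,\leq\, T$\,, and writing \,$A \,:=\, S \,-\, T \,\in\, \mathcal{S}\,(\,H\,)$\,, we get \,$\langle\, A\,f \,,\, f\,\rangle \,=\, 0$\, for every \,$f \,\in\, H$\,, and we must conclude \,$A \,=\, 0$. I would obtain this from the polarization identity
\[
\langle\, A\,f \,,\, g\,\rangle \;=\; \frac{1}{4}\,\sum_{k \,=\, 0}^{3}\, i^{\,k}\, \big\langle\, A\,(\,f \,+\, i^{\,k}\, g\,) \,,\, f \,+\, i^{\,k}\, g\,\big\rangle ,
\]
whose right-hand side vanishes term by term, so \,$\langle\, A\,f \,,\, g\,\rangle \,=\, 0$\, for all \,$f,\, g \,\in\, H$\,, whence \,$A \,=\, 0$\,, i.e.\ \,$S \,=\, T$\,.

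The main (indeed the only) obstacle is this vanishing-quadratic-form lemma; everything else is bookkeeping. In a complex Hilbert space self-adjointness is not even needed for it, but if \,$H$\, is real one instead uses \,$A \,=\, A^{\,\ast}$\, together with \,$\langle\, A\,(\,f + g\,) \,,\, f + g\,\rangle \,-\, \langle\, A\,(\,f - g\,) \,,\, f - g\,\rangle \,=\, 4\,\langle\, A\,f \,,\, g\,\rangle$\,. Once antisymmetry is established, \,$(\,\mathcal{S}\,(\,H\,) \,,\, \leq\,)$\, is a partially ordered set, which completes the proof.
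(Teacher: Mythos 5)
Your proof is correct. Note that the paper itself gives no proof of this statement --- it is quoted from the reference [Jain] as a known fact --- so there is nothing to compare against; your argument is the standard one. Reflexivity and transitivity are indeed immediate, and you correctly isolate the only nontrivial point, antisymmetry, reducing it to the lemma that a self-adjoint $A$ with $\left<\,A\,f \,,\, f\,\right> \,=\, 0$ for all $f$ must vanish; your polarization identity is the right one for an inner product linear in the first slot (each term on the right is a value of the quadratic form of $A$, so the sum is $4\,\left<\,A\,f \,,\, g\,\right> \,=\, 0$), and your remark that self-adjointness is superfluous in the complex case but essential in the real case is accurate. Since the paper works with complex separable Hilbert spaces, the complex polarization argument alone suffices for its purposes.
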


\begin{definition}\cite{O}
A sequence \,$\left\{\,f_{\,i}\,\right\}_{i \,\in\, I}$\, of elements in \,$H$\, is a frame for \,$H$\, if there exist constants \,$A,\, B \,>\, 0$\, such that
\[ A\, \|\,f\,\|^{\,2} \,\leq\, \sum\limits_{i \,\in\, I}\, \left|\, \left<\,f \,,\, f_{\,i} \, \right>\,\right|^{\,2} \,\leq\, B \,\|\,f\,\|^{\,2}\; \;\forall\; f \,\in\, H. \]
The constants \,$A$\, and \,$B$\, are called frame bounds.
\end{definition}

\begin{definition}\cite{Kutyniok}
Let \,$\left\{\,v_{i}\,\right\}_{ i \,\in\, I}$\, be a collection of positive weights i\,.\,e., \,$v_{\,i} \,>\, 0\, \;\forall\; i \,\in\, I$.\;A family of weighted closed subspaces \,$\left\{\, (\,V_{i},\, v_{i}\,) \,:\, i \,\in\, I\,\right\}$\, is called a fusion frame for \,$H$\, if there exist constants \,$0 \,<\, A \,\leq\, B \,<\, \infty$\, such that
\[A \;\left\|\,f \,\right\|^{\,2} \,\leq\, \sum\limits_{\,i \,\in\, I}\, v_{i}^{\,2} \;\left\|\, P_{\,V_{i}}\,(\,f\,) \,\right\|^{\,2} \,\leq\, B \, \left\|\, f \, \right\|^{\,2}\; \;\forall\; f \,\in\, H.\]
The constants \,$A,\, B$\; are called fusion frame bounds.
\end{definition}

\begin{definition}\cite{Sun}
A sequence \,$\left\{\,\Lambda_{i} \,\in\, \mathcal{B}\,(\,H,\, H_{i}\,) \,:\, i \,\in\, I\,\right\}$\, is called a generalized frame or g-frame for \,$H$\; with respect to \,$\left\{\,H_{i}\,\right\}_{i \,\in\, I}$\; if there exist two positive constants \,$A$\; and \,$B$\; such that
\[A \;\left \|\, f \,\right \|^{\,2} \,\leq\, \sum\limits_{\,i \,\in\, I}\, \left\|\,\Lambda_{i}\,(\,f\,) \,\right\|^{\,2} \,\leq\, B \; \left\|\, f \, \right\|^{\,2}\; \;\forall\; f \,\in\, H.\]
\,$A$\; and \,$B$\; are called the lower and upper bounds of g-frame, respectively.
\end{definition}

\begin{definition}\cite{Ahmadi}
Let \,$\left\{\,v_{i}\,\right\}_{ i \,\in\, I}$\, be a collection of positive weights.\;Then the family \,$\Lambda \,=\, \{\,\left(\,V_{i},\, \Lambda_{i},\, v_{i}\,\right)\,\}_{i \,\in\, I}$\; is called a generalized fusion frame or a g-fusion frame for \,$H$\; respect to \,$\left\{\,H_{i}\,\right\}_{i \,\in\, I}$\; if there exist constants \,$0 \,<\, A \,\leq\, B \,<\, \infty$\; such that
\begin{equation}\label{eq1.01}
A \;\left \|\,f \,\right \|^{\,2} \,\leq\, \sum\limits_{\,i \,\in\, I}\,v_{i}^{\,2}\, \left\|\,\Lambda_{i}\,P_{\,V_{i}}\,(\,f\,) \,\right\|^{\,2} \,\leq\, B \; \left\|\, f \, \right\|^{\,2}\; \;\forall\; f \,\in\, H.
\end{equation}
The constants \,$A$\; and \,$B$\; are called the lower and upper bounds of g-fusion frame, respectively.\,If \,$A \,=\, B$\; then \,$\Lambda$\; is called tight g-fusion frame and if \;$A \,=\, B \,=\, 1$\, then we say \,$\Lambda$\; is a Parseval g-fusion frame.\;If  \,$\Lambda$\; satisfies the inequality
\[\sum\limits_{\,i \,\in\, I}\,v_{i}^{\,2}\, \left\|\,\Lambda_{i}\,P_{\,V_{i}}\,(\,f\,) \,\right\|^{\,2} \,\leq\, B \; \left\|\, f \, \right\|^{\,2}\; \;\forall\; f \,\in\, H\]
then it is called a g-fusion Bessel sequence with bound \,$B$\; in \,$H$. 
\end{definition}

\begin{definition}\cite{Ahmadi}\label{defn1}
Let \,$\Lambda \,=\, \{\,\left(\,V_{i},\, \Lambda_{i},\, v_{i}\,\right)\,\}_{i \,\in\, I}$\, be a g-fusion Bessel sequence in \,$H$\, with a bound \,$B$.\;The synthesis operator \,$T_{\Lambda}$\, of \,$\Lambda$\; is defined as 
\[ T_{\Lambda} \,:\, l^{\,2}\left(\,\left\{\,H_{i}\,\right\}_{ i \,\in\, I}\,\right) \,\to\, H,\]
\[T_{\Lambda}\,\left(\,\left\{\,f_{\,i}\,\right\}_{i \,\in\, I}\,\right) \,=\,  \sum\limits_{\,i \,\in\,I}\, v_{i}\, P_{\,V_{i}}\,\Lambda_{i}^{\,\ast}\,f_{i}\; \;\;\forall\; \{\,f_{i}\,\}_{i \,\in\, I} \,\in\, l^{\,2}\left(\,\left\{\,H_{i}\,\right\}_{ i \,\in\, I}\,\right)\] and the analysis operator is given by 
\[ T_{\Lambda}^{\,\ast} \,:\, H \,\to\, l^{\,2}\left(\,\left\{\,H_{i}\,\right\}_{ i \,\in\, I}\,\right),\; T_{\Lambda}^{\,\ast}\,(\,f\,) \,=\,  \left\{\,v_{i}\,\Lambda_{i}\, P_{\,V_{i}}\,(\,f\,)\,\right\}_{ i \,\in\, I}\; \;\forall\; f \,\in\, H.\]
The g-fusion frame operator \,$S_{\Lambda} \,:\, H \,\to\, H$\; is defined as follows:
\[S_{\Lambda}\,(\,f\,) \,=\, T_{\Lambda}\,T_{\Lambda}^{\,\ast}\,(\,f\,) \,=\, \sum\limits_{\,i \,\in\, I}\, v_{i}^{\,2}\; P_{\,V_{i}}\, \Lambda_{i}^{\,\ast}\; \Lambda_{i}\, P_{\,V_{i}}\,(\,f\,)\; \;\forall\; f \,\in\, H.\]
\end{definition}

\begin{note}\cite{Ahmadi}
Let \,$\Lambda \,=\, \{\,\left(\,V_{i},\, \Lambda_{i},\, v_{i}\,\right)\,\}_{i \,\in\, I}$\, be a g-fusion Bessel sequence in \,$H$.\,Then it can be easily verify that for all \,$f \,\in\, H$ 
\[\left<\,S_{\Lambda}\,(\,f\,) \,,\, f\,\right> \,=\, \sum\limits_{\,i \,\in\, I}\, v_{i}^{\,2}\, \left\|\,\Lambda_{i}\, P_{\,V_{i}}\,(\,f\,) \,\right\|^{\,2} \,=\, \left\|\,T_{\Lambda}^{\,\ast}\,(\,f\,)\,\right\|^{\,2}.\]
If \,$\Lambda$\, is a g-fusion frame with bounds \,$A$\, and \,$B$\, then from (\ref{eq1.01}),
\[\left<\,A\,f \,,\, f\,\right> \,\leq\, \left<\,S_{\Lambda}\,(\,f\,) \,,\, f\,\right> \,\leq\, \left<\,B\,f \,,\, f\,\right>\; \;\forall\; f \,\in\, H.\]
Now, according to the Theorem (\ref{th0.01}), we can write, \,$A\,I_{H} \,\leq\,S_{\Lambda} \,\leq\, B\,I_{H}$.\;The operator \,$S_{\Lambda}$\; is bounded, self-adjoint, positive and invertible.\;Also, \,$ B^{\,-1}\,I_{H} \,\leq\, S_{\,\Lambda}^{\,-1} \,\leq\, A^{\,-1}\,I_{H}$.\;Hence, reconstruction formula for any \,$f \,\in\, H$, is given by  
\[ f \,=\, \sum\limits_{\,i \,\in\, I}\,v^{\,2}_{\,i}\,P_{\,V_{i}}\,\Lambda^{\,\ast}_{i}\,\Lambda_{i}\,P_{\,V_{i}}\,S^{\,-\, 1}_{\Lambda}\,(\,f\,) \,=\, \sum\limits_{\,i \,\in\, I}\,v^{\,2}_{\,i}\,S^{\,-\, 1}_{\Lambda}\,P_{\,V_{i}}\,\Lambda^{\,\ast}_{i}\,\Lambda_{i}\,P_{\,V_{i}}\,(\,f\,).\]
\end{note}

\begin{definition}\cite{Ahmadi}\label{defn2}
Let \,$\Lambda \,=\, \{\,\left(\,V_{i},\, \Lambda_{i},\, v_{i}\,\right)\,\}_{i \,\in\, I}$\, be a g-fusion frame for \,$H$\, with frame operator \,$S_{\Lambda}$.\;Then the g-fusion frame \,$\left\{\,\left(\,S^{\,-\, 1}_{\Lambda}\,V_{i},\, \Lambda_{i}\,P_{\,V_{i}}\,S^{\,-\, 1}_{\Lambda},\, v_{i}\,\right)\,\right\}_{i \,\in\, I}$\, is called the canonical dual g-fusion frame of \,$\Lambda$. 
\end{definition}

\begin{definition}\cite{Ghosh}
Let \,$\Lambda \,=\, \left\{\,\left(\,V_{i},\, \Lambda_{i},\, v_{i}\,\right)\,\right\}_{i \,\in\, I}$\, and \,$\Lambda^{\,\prime} \,=\, \left\{\,\left(\,V^{\,\prime}_{i},\, \Lambda^{\,\prime}_{i},\, v^{\,\prime}_{i}\,\right)\,\right\}_{i \,\in\, I}$\, be two g-fusion Bessel sequences in \,$H$\; with bounds \,$D_{\,1}$\, and \,$D_{\,2}$, respectively.\;Then the operator \,$S_{\Lambda\,\Lambda^{\,\prime}} \,:\, H \,\to\, H$, defined by
\[S_{\Lambda\,\Lambda^{\,\prime}}\,(\,f\,) \;=\; \sum\limits_{\,i \,\in\, I}\,v_{\,i}\,v^{\,\prime}_{\,i}\,P_{\,V_{i}}\,\Lambda_{i}^{\,\ast}\,\Lambda_{i}^{\,\prime} \,P_{\,V^{\,\prime}_{i}}\,(\,f\,)\;\; \;\forall\; f \,\in\, H,\] is called the frame operator for the pair of g-fusion Bessel sequences \,$\Lambda$\, and \,$\Lambda^{\,\prime}$.
\end{definition}

There are several ways to introduced the tensor product of Hilbert spaces.\;The tensor product of Hilbert spaces \,$H$\, and \,$K$\, is a certain linear space of operators which was represented by Folland in \cite{Folland}, Kadison and Ringrose in \cite{Kadison}.\\

\begin{definition}\cite{Upender}
The tensor product of Hilbert spaces \,$H$\, and \,$K$\, is denoted by \,$H \,\otimes\, K$\, and it is defined to be an inner product space associated with the inner product
\begin{equation}\label{eq1.001}   
\left<\,f \,\otimes\, g \,,\, f^{\,\prime} \,\otimes\, g^{\,\prime}\,\right> \,=\, \left<\,f  \,,\, f^{\,\prime}\,\right>_{\,1}\;\left<\,g  \,,\, g^{\,\prime}\,\right>_{\,2}\; \;\forall\; f,\, f^{\,\prime} \,\in\, H\; \;\text{and}\; \;g,\, g^{\,\prime} \,\in\, K.
\end{equation}
The norm on \,$H \,\otimes\, K$\, is given by 
\begin{equation}\label{eq1.0001}
\left\|\,f \,\otimes\, g\,\right\| \,=\, \|\,f\,\|_{\,1}\;\|\,g\,\|_{\,2}\; \;\forall\; f \,\in\, H\; \;\&\; \,g \,\in\, K.
\end{equation}
The space \,$H \,\otimes\, K$\, is complete with respect to the above inner product.\;Therefore the space \,$H \,\otimes\, K$\, is a Hilbert space.     
\end{definition} 

For \,$Q \,\in\, \mathcal{B}\,(\,H\,)$\, and \,$T \,\in\, \mathcal{B}\,(\,K\,)$, the tensor product of operators \,$Q$\, and \,$T$\, is denoted by \,$Q \,\otimes\, T$\, and defined as 
\[\left(\,Q \,\otimes\, T\,\right)\,A \,=\, Q\,A\,T^{\,\ast}\; \;\forall\; \;A \,\in\, H \,\otimes\, K.\]
It can be easily verified that \,$Q \,\otimes\, T \,\in\, \mathcal{B}\,(\,H \,\otimes\, K\,)$\, \cite{Folland}.\\

\begin{theorem}\cite{Folland}\label{th1.1}
Suppose \,$Q,\, Q^{\prime} \,\in\, \mathcal{B}\,(\,H\,)$\, and \,$T,\, T^{\prime} \,\in\, \mathcal{B}\,(\,K\,)$, then \begin{itemize}
\item[(I)]\hspace{.2cm} \,$Q \,\otimes\, T \,\in\, \mathcal{B}\,(\,H \,\otimes\, K\,)$\, and \,$\left\|\,Q \,\otimes\, T\,\right\| \,=\, \|\,Q\,\|\; \|\,T\,\|$.
\item[(II)]\hspace{.2cm} \,$\left(\,Q \,\otimes\, T\,\right)\,(\,f \,\otimes\, g\,) \,=\, Q\,(\,f\,) \,\otimes\, T\,(\,g\,)$\, for all \,$f \,\in\, H,\, g \,\in\, K$.
\item[(III)]\hspace{.2cm} $\left(\,Q \,\otimes\, T\,\right)\,\left(\,Q^{\,\prime} \,\otimes\, T^{\,\prime}\,\right) \,=\, (\,Q\,Q^{\,\prime}\,) \,\otimes\, (\,T\,T^{\,\prime}\,)$. 
\item[(IV)]\hspace{.2cm} \,$Q \,\otimes\, T$\, is invertible if and only if \,$Q$\, and \,$T$\, are invertible, in which case \,$\left(\,Q \,\otimes\, T\,\right)^{\,-\, 1} \,=\, \left(\,Q^{\,-\, 1} \,\otimes\, T^{\,-\, 1}\,\right)$.
\item[(V)]\hspace{.2cm} \,$\left(\,Q \,\otimes\, T\,\right)^{\,\ast} \,=\, \left(\,Q^{\,\ast} \,\otimes\, T^{\,\ast}\,\right)$.    
\end{itemize}
\end{theorem}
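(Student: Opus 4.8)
The plan is to carry out the five assertions in the order (II), (III), (I), (V), (IV), each of which feeds the next, working throughout inside the operator realization of \,$H \otimes K$\, in which an elementary tensor \,$f \otimes g$\, is viewed as a (rank-one) operator and in which, by definition, \,$(Q \otimes T)\,A \,=\, Q\,A\,T^{\,\ast}$\, for every \,$A \,\in\, H \otimes K$. Item (II) is then immediate: evaluating \,$Q\,(f \otimes g)\,T^{\,\ast}$\, in this model and comparing it with the operator represented by \,$Q(f) \otimes T(g)$\, shows the two agree, and this also records that \,$Q \otimes T$\, is linear.

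For (III) I would expand the rule twice: for arbitrary \,$A \,\in\, H \otimes K$,
\[(Q \otimes T)\,(Q^{\,\prime} \otimes T^{\,\prime})\,A \,=\, Q\,\bigl(Q^{\,\prime}\,A\,T^{\,\prime\,\ast}\bigr)\,T^{\,\ast} \,=\, (Q\,Q^{\,\prime})\,A\,(T\,T^{\,\prime})^{\,\ast} \,=\, \bigl((Q\,Q^{\,\prime}) \otimes (T\,T^{\,\prime})\bigr)\,A,\]
using associativity of composition and \,$(T\,T^{\,\prime})^{\,\ast} \,=\, T^{\,\prime\,\ast}\,T^{\,\ast}$. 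For (I), the estimate \,$\left\|\,Q\,A\,T^{\,\ast}\,\right\| \,\leq\, \|\,Q\,\|\,\|\,A\,\|\,\|\,T^{\,\ast}\,\| \,=\, \|\,Q\,\|\,\|\,T\,\|\,\|\,A\,\|$, valid in this model, shows \,$Q \otimes T \,\in\, \mathcal{B}\,(\,H \otimes K\,)$\, together with \,$\left\|\,Q \otimes T\,\right\| \,\leq\, \|\,Q\,\|\,\|\,T\,\|$; for the reverse inequality I would choose unit vectors \,$f \,\in\, H$\, and \,$g \,\in\, K$\, with \,$\|\,Qf\,\|_{1}$\, and \,$\|\,Tg\,\|_{2}$\, within \,$\varepsilon$\, of \,$\|\,Q\,\|$\, and \,$\|\,T\,\|$, and use (II) with \eqref{eq1.0001} to get \,$\left\|\,(Q \otimes T)(f \otimes g)\,\right\| \,=\, \|\,Qf\,\|_{1}\,\|\,Tg\,\|_{2}$\, while \,$\left\|\,f \otimes g\,\right\| \,=\, 1$, then let \,$\varepsilon \to 0$.

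For (V) I would check on elementary tensors, using (II) and \eqref{eq1.001}, that
\begin{align*}
\left<\,(Q \otimes T)(f \otimes g)\,,\, f^{\,\prime} \otimes g^{\,\prime}\,\right>
&\,=\, \left<\,Qf\,,\,f^{\,\prime}\,\right>_{1}\,\left<\,Tg\,,\,g^{\,\prime}\,\right>_{2} \\
&\,=\, \left<\,f \otimes g\,,\, (Q^{\,\ast} \otimes T^{\,\ast})(f^{\,\prime} \otimes g^{\,\prime})\,\right>,
\end{align*}
and then extend the identity \,$\left<\,(Q \otimes T)A\,,\, B\,\right> \,=\, \left<\,A\,,\, (Q^{\,\ast} \otimes T^{\,\ast})B\,\right>$\, to all \,$A,\, B \,\in\, H \otimes K$\, by (bi)linearity and continuity, since finite linear combinations of elementary tensors are dense and both operators are bounded by (I); uniqueness of the adjoint gives the claim. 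Finally, for (IV): if \,$Q$\, and \,$T$\, are invertible, (III) yields \,$(Q \otimes T)(Q^{\,-\,1} \otimes T^{\,-\,1}) \,=\, I_{H} \otimes I_{K} \,=\, I_{H \otimes K}$\, and the reverse composition, so \,$Q \otimes T$\, is invertible with inverse \,$Q^{\,-\,1} \otimes T^{\,-\,1}$; for the converse I would argue the contrapositive — if \,$Q$\, is not bounded below, a unit vector \,$f$\, with \,$\|\,Qf\,\|_{1}$\, arbitrarily small makes \,$\left\|\,(Q \otimes T)(f \otimes g)\,\right\|$\, arbitrarily small for a fixed unit \,$g$\, with \,$Tg \neq 0$, and if \,$Q$\, is bounded below but not surjective, a unit vector \,$f_{0} \,\perp\, \overline{Q\,H}$\, satisfies \,$Q^{\,\ast}f_{0} \,=\, 0$, so by (V) the nonzero vector \,$f_{0} \otimes g_{0}$\, is orthogonal to the range of \,$Q \otimes T$; in either case \,$Q \otimes T$\, is not invertible, and symmetrically for \,$T$.

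I expect the only genuinely delicate part to be the density-and-continuity passage from elementary tensors to the whole space in (V), together with the boundedness input from (I) that licenses it; the careful treatment of the degenerate cases \,$H \,=\, \{0\}$\, or \,$K \,=\, \{0\}$\, in (IV) is a minor bookkeeping matter, and everything else reduces to routine manipulation in the operator model, so no single step should present a real obstacle.
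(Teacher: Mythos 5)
The paper does not prove this theorem at all: it is quoted as a preliminary result with a citation to Folland \cite{Folland}, so there is no in-paper argument to compare yours against. Judged on its own, your proof is correct and is essentially the standard one for the operator model that the paper adopts (elements of \,$H \otimes K$\, realized as Hilbert--Schmidt operators, with \,$(Q \otimes T)A = QAT^{\ast}$\,): (II) and (III) are direct computations, (V) follows from the elementary-tensor identity plus density of finite-rank operators, and your two-case contrapositive for (IV) (not bounded below versus bounded below with non-dense range, the latter handled via \,$Q^{\ast}f_{0} = 0$\, and (V)) is complete, since a bounded-below operator has closed range. The one step worth flagging is the inequality \,$\left\|\,Q\,A\,T^{\ast}\,\right\| \,\leq\, \|\,Q\,\|\,\|\,A\,\|\,\|\,T^{\ast}\,\|$\, in (I): the norm on \,$A$\, here is the Hilbert--Schmidt norm, not the operator norm, so what you are invoking is the two-sided ideal property of the Hilbert--Schmidt class (\,$\|\,Q\,A\,S\,\|_{HS} \leq \|\,Q\,\|\,\|\,A\,\|_{HS}\,\|\,S\,\|$\,); this is true but is the only genuinely non-routine input and should be named rather than absorbed into ``valid in this model.'' Likewise, in the converse of (IV) the phrase ``a fixed unit \,$g$\, with \,$Tg \neq 0$\,'' silently assumes \,$T \neq 0$\,; if \,$T = 0$\, then \,$Q \otimes T = 0$\, is trivially non-invertible, so the gap is cosmetic, but it belongs in the write-up alongside the degenerate-space remark you already make.
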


\section{Construction of $g$-fusion frame in tensor product of Hilbert spaces}

\smallskip\hspace{.6 cm} In this section, we discuss the generalized fusion frame or \,$g$-fusion frame and the canonical dual \,$g$-fusion frame in tensor product of Hilbert spaces. 

\begin{definition}
Let \,$\left\{\,v_{\,i}\,\right\}_{\, i \,\in\, I},\;\left\{\,w_{\,j}\,\right\}_{ j \,\in\, J}$\, be two families of positive weights i\,.\,e., \,$v_{\,i} \,>\, 0\, \;\forall\; i \,\in\, I,\; \,w_{\,j} \,>\, 0\, \;\forall\; j \,\in\, J$\, and \,$\Lambda_{i} \,\otimes\, \Gamma_{j} \,\in\, \mathcal{B}\,(\,H \,\otimes\, K,\, H_{i} \,\otimes\, K_{j}\,)$\, for each \,$i \,\in\, I$\, and \,$j \,\in\, J$.\;Then the family \,$\Lambda \,\otimes\, \Gamma \,=\, \left\{\,\left(\,V_{i} \,\otimes\, W_{j},\, \Lambda_{i} \,\otimes\, \Gamma_{j},\, v_{i}\,w_{j}\,\right)\,\right\}_{\,i,\,j}$\, is said to be a generalized fusion frame or g-fusion frame for \,$H \,\otimes\, K$\, with respect to \,$\left\{\,H_{i} \,\otimes\, K_{j}\,\right\}_{\,i,\,j}$\, if there exist constants \,$0 \,<\, A \,\leq\, B \,<\, \infty$\, such that for all \,$f \,\otimes\, g \,\in\, H \,\otimes\, K$
\begin{equation}\label{eqn2.1}
A\, \left\|\,f \,\otimes\, g\,\right\|^{\,2} \,\leq\, \sum\limits_{i,\, j}\,v^{\,2}_{\,i}\,w^{\,2}_{\,j}\,\left\|\left(\,\Lambda_{i} \,\otimes\, \Gamma_{j}\,\right)\,P_{\,V_{\,i} \,\otimes\, W_{\,j}}\,(\,f \,\otimes\, g\,)\,\right\|^{\,2} \,\leq\, B\, \left\|\,f \,\otimes\, g\,\right\|^{\,2}
\end{equation}
where \,$P_{\,V_{i} \,\otimes\, W_{j}}$\, is the orthogonal projection of \,$H \,\otimes\, K$\, onto \,$V_{\,i} \,\otimes\, W_{\,j}$.\;The constants \,$A$\, and \,$B$\, are called the frame bounds of \,$\Lambda \,\otimes\, \Gamma$.\;If \,$A \,=\, B$\, then it is called a tight g-fusion frame.\;If the family \,$\Lambda \,\otimes\, \Gamma$\, satisfies the inequality
\[\sum\limits_{i,\, j}\,v^{\,2}_{\,i}\,w^{\,2}_{\,j}\,\left\|\left(\,\Lambda_{i} \,\otimes\, \Gamma_{j}\,\right)\,P_{\,V_{i} \,\otimes\, W_{j}}\,(\,f \,\otimes\, g\,)\,\right\|^{\,2} \,\leq\, B\, \left\|\,f \,\otimes\, g\,\right\|^{\,2}\; \;\forall\; f \,\otimes\, g \,\in\, H \,\otimes\, K,\]
then it is called a g-fusion Bessel sequence in \,$H \,\otimes\, K$\, with bound \,$B$.   
\end{definition}

\begin{note}
For \,$i \,\in\, I$\, and \,$j \,\in\, J$, define the space \,$l^{\,2}\,\left(\,\left\{\,H_{i} \,\otimes\, K_{j}\,\right\}\,\right)$
\[\,=\, \left\{\,\left\{\,f_{\,i} \,\otimes\, g_{\,j}\,\right\} \,:\, f_{\,i} \,\otimes\, g_{\,j} \,\in\, H_{i} \,\otimes\, K_{j}, \;\&\; \;\sum\limits_{i,\, j}\,\left\|\,f_{\,i} \,\otimes\, g_{\,j}\,\right\|^{\,2} \,<\, \infty\,\right\}\]
with the inner product \,$\left<\,\left\{\,f_{\,i} \otimes g_{\,j}\,\right\} \,,\, \left\{\,f^{\,\prime}_{\,i} \otimes g^{\,\prime}_{\,j}\,\right\}\,\right>_{l^{\,2}} \,=\, \sum\limits_{i,\, j}\,\left<\,f_{\,i} \otimes g_{\,j}\, \,,\, f^{\,\prime}_{\,i} \otimes g^{\,\prime}_{\,j}\,\right>$
\[=\,\sum\limits_{i,\, j}\,\left<\,f_{\,i} \,,\, f^{\,\prime}_{\,i}\,\right>_{H_{i}}\,\left<\,g_{\,j} \,,\, g^{\,\prime}_{\,j}\,\right>_{K_{j}} \,=\, \left(\,\sum\limits_{\,i \,\in\, I}\,\left<\,f_{\,i} \,,\, f^{\,\prime}_{\,i}\,\right>_{H_{i}}\,\right)\,\left(\,\sum\limits_{\,j \,\in\, J}\,\left<\,g_{\,j} \,,\, g^{\,\prime}_{\,j}\,\right>_{K_{j}}\,\right)\]
\[=\, \left<\,\left\{\,f_{\,i}\,\right\}_{ i \,\in\, I} \,,\, \left\{\,f^{\,\prime}_{\,i}\,\right\}_{ i \,\in\, I}\,\right>_{l^{\,2}\left(\,\left\{\,H_{i}\,\right\}_{i \,\in\, I}\,\right)}\,\left<\,\left\{\,g_{\,j}\,\right\}_{ j \,\in\, J} \,,\, \left\{\,g^{\,\prime}_{\,j}\,\right\}_{ j \,\in\, J}\,\right>_{l^{\,2}\left(\,\left\{\,K_{j}\,\right\}_{ j \,\in\, J}\,\right)}.\]
The space \,$l^{\,2}\,\left(\,\left\{\,H_{i} \,\otimes\, K_{j}\,\right\}\,\right)$\, is complete with the above inner product.\;Then it becomes a Hilbert space with respect to the above inner product.  
\end{note}

\begin{note}
Since \,$\left\{\,V_{i}\,\right\}_{ i \,\in\, I},\; \left\{\,W_{j}\,\right\}_{ j \,\in\, J}$\, and \,$\left\{\,V_{i} \,\otimes\, W_{j} \right\}_{i,\,j}$\, are the families of closed subspaces of the Hilbert spaces \,$H,\, K$\, and \,$H \,\otimes\, K$, respectively, it is easy to verify that \,$P_{\,V_{i} \,\otimes\, W_{j}} \,=\, P_{\,V_{i}} \,\otimes\, P_{\,W_{j}}$.
\end{note}

\begin{theorem}\label{th1.2}
The families \,$\Lambda \,=\, \left\{\,\left(\,V_{i},\, \Lambda_{i},\, v_{\,i}\,\right)\,\right\}_{\, i \,\in\, I}$\, and \,$\Gamma \,=\, \left\{\,\left(\,W_{j},\, \Gamma_{j},\, w_{\,j}\,\right)\,\right\}_{ j \,\in\, J}$\, are g-fusion frames for \,$H$\, and \,$K$\, with respect to \,$\left\{\,H_{i}\,\right\}_{\, i \,\in\, I}$\, and \,$\left\{\,K_{j}\,\right\}_{\, j \,\in\, J}$, respectively if and only if the family \,$\Lambda \,\otimes\, \Gamma \,=\, \left\{\,\left(\,V_{i} \,\otimes\, W_{j},\, \Lambda_{i} \,\otimes\, \Gamma_{j},\, v_{\,i}\,w_{\,j}\,\right)\,\right\}_{\,i,\,j}$\, is a g-fusion frame for \,$H \,\otimes\, K$\, with respect to \,$\left\{\,H_{i} \,\otimes\, K_{j}\,\right\}_{\,i,\,j}$. 
\end{theorem}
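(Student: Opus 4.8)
The plan is to reduce the statement to a single product formula for the g-fusion frame sum and then read off both implications. The first step is the identity on which everything rests: by the Note preceding the theorem $P_{V_i \otimes W_j} = P_{V_i} \otimes P_{W_j}$, so parts (II) and (III) of Theorem \ref{th1.1} (and their evident extensions to operators between different Hilbert spaces) give
\[
(\Lambda_i \otimes \Gamma_j)\,P_{V_i \otimes W_j}(f \otimes g) \,=\, (\Lambda_i P_{V_i} f) \otimes (\Gamma_j P_{W_j} g) \qquad (f \in H,\; g \in K).
\]
Taking norms via $(\ref{eq1.0001})$ and summing over $i, j$ (a double series of nonnegative terms, hence freely rearrangeable) yields
\[
\sum_{i,\,j} v_i^2 w_j^2 \big\|(\Lambda_i \otimes \Gamma_j) P_{V_i \otimes W_j}(f \otimes g)\big\|^2 \,=\, \Big(\sum_{i} v_i^2 \|\Lambda_i P_{V_i} f\|^2\Big)\Big(\sum_{j} w_j^2 \|\Gamma_j P_{W_j} g\|^2\Big),
\]
while $\|f \otimes g\|^2 = \|f\|_1^2 \|g\|_2^2$. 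Hence, if $\Lambda$ and $\Gamma$ are g-fusion frames with bounds $A_1 \leq B_1$ and $A_2 \leq B_2$, multiplying the two defining inequalities (all factors being nonnegative) gives $(\ref{eqn2.1})$ for $\Lambda \otimes \Gamma$ with bounds $A_1 A_2$ and $B_1 B_2$, at least on elementary tensors; conversely, if $\Lambda \otimes \Gamma$ is a g-fusion frame with bounds $A \leq B$, then (noting that $\Gamma$ cannot be degenerate, since otherwise the left inequality of $(\ref{eqn2.1})$ forces $H \otimes K = \{0\}$) one fixes a unit vector $g_0 \in K$ with $c := \sum_j w_j^2 \|\Gamma_j P_{W_j} g_0\|^2 \in (0, \infty)$ and divides the product formula by $c$ to obtain the g-fusion frame inequality for $\Lambda$ with bounds $A/c$ and $B/c$; $\Gamma$ is handled symmetrically.

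The one point that is not a routine estimate — and the step I expect to be the real obstacle — is that $(\ref{eqn2.1})$ is required for every element of $H \otimes K$, not just for elementary tensors. I would settle this by passing to the g-fusion frame operators. Using $(\Lambda_i \otimes \Gamma_j)^{\ast} = \Lambda_i^{\ast} \otimes \Gamma_j^{\ast}$ (part (V) of Theorem \ref{th1.1}) together with parts (II) and (III) again, the same computation gives, on an elementary tensor,
\[
S_{\Lambda \otimes \Gamma}(f \otimes g) \,=\, \Big(\sum_{i} v_i^2 P_{V_i} \Lambda_i^{\ast} \Lambda_i P_{V_i} f\Big) \otimes \Big(\sum_{j} w_j^2 P_{W_j} \Gamma_j^{\ast} \Gamma_j P_{W_j} g\Big) \,=\, (S_\Lambda \otimes S_\Gamma)(f \otimes g);
\]
since both $S_{\Lambda \otimes \Gamma}$ and $S_\Lambda \otimes S_\Gamma$ are bounded and the elementary tensors span a dense subspace of $H \otimes K$, this forces $S_{\Lambda \otimes \Gamma} = S_\Lambda \otimes S_\Gamma$ on all of $H \otimes K$. (In the converse direction this presupposes that $\Lambda$ and $\Gamma$ are g-fusion Bessel sequences so that $S_\Lambda$ and $S_\Gamma$ are defined, but that follows from the estimate of the first paragraph applied to the Bessel inequality.)

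It then remains only to invoke Theorem \ref{th1.1} and the Note on the frame operator. In the forward direction, $\Lambda \otimes \Gamma$ is g-fusion Bessel by the estimate of the first paragraph, so $S_{\Lambda \otimes \Gamma}$ is a bounded, positive, self-adjoint operator; it equals $S_\Lambda \otimes S_\Gamma$, which is invertible by part (IV) of Theorem \ref{th1.1} because $S_\Lambda$ and $S_\Gamma$ are (the Note, since $\Lambda$ and $\Gamma$ are frames), and a bounded positive self-adjoint invertible operator $S$ automatically satisfies $\|S^{-1}\|^{-1} I \leq S \leq \|S\| I$, which is exactly $(\ref{eqn2.1})$ for all vectors. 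In the converse direction, $\Lambda$ and $\Gamma$ are g-fusion Bessel by the division trick above, $S_{\Lambda \otimes \Gamma} = S_\Lambda \otimes S_\Gamma$ is invertible since $\Lambda \otimes \Gamma$ is a frame, so by part (IV) both $S_\Lambda$ and $S_\Gamma$ are invertible, whence $\Lambda$ and $\Gamma$ are g-fusion frames by the (reversible) argument of the Note. Thus the only genuinely non-routine ingredient is the operator identity $S_{\Lambda \otimes \Gamma} = S_\Lambda \otimes S_\Gamma$ together with the density argument behind it; everything else is the bookkeeping of the product estimate.
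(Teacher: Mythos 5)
Your first paragraph is exactly the paper's proof: the identity $(\Lambda_i\otimes\Gamma_j)P_{V_i\otimes W_j}(f\otimes g)=\Lambda_iP_{V_i}(f)\otimes\Gamma_jP_{W_j}(g)$, the factorization of the double sum into the product of the two single sums, multiplication of the two frame inequalities for the forward direction, and the fix-a-nonzero-$g$-and-divide trick for the converse. Where you diverge is in treating the extension from elementary tensors to all of $H\otimes K$ as the crux and resolving it via $S_{\Lambda\otimes\Gamma}=S_\Lambda\otimes S_\Gamma$, density of the span of elementary tensors, and the spectral bounds $\|S^{-1}\|^{-1}I\le S\le\|S\|I$. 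The paper does none of this, because its definition of a g-fusion frame for $H\otimes K$ (inequality (\ref{eqn2.1})) is quantified only over elementary tensors $f\otimes g$, so the product estimate already closes the argument under the paper's conventions; the operator identity you use is instead proved separately as the next theorem. Your extra step is correct and buys a genuinely stronger conclusion (the frame inequality on the whole tensor product space, which is what one would want for the synthesis/analysis operators to behave as claimed later), at the cost of presupposing the frame-operator identity that the paper derives afterwards --- so if you follow this route you should prove $S_{\Lambda\otimes\Gamma}=S_\Lambda\otimes S_\Gamma$ first rather than cite it.
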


\begin{proof}
First we suppose that \,$\Lambda$\, and \,$\Gamma$\, are \,$g$-fusion frames for \,$H$\, and \,$K$.\;Then there exist positive constants \,$(\,A,\, B\,)$\, and \,$(\,C,\, D\,)$\, such that
\begin{equation}\label{eq1}
A \,\left\|\,f \,\right\|_{\,1}^{\,2} \,\leq\, \sum\limits_{\,i \,\in\, I}\, v_{i}^{\,2} \,\left\|\,\Lambda_{i}\,P_{\,V_{i}}\,(\,f\,) \,\right\|_{\,1}^{\,2}  \,\leq\, B\, \left\|\, f \, \right\|_{\,1}^{\,2}\; \;\forall\; f \,\in\, H
\end{equation}
\begin{equation}\label{eq1.1}
C\,\left\|\,g \,\right\|_{\,2}^{\,2} \,\leq\, \sum\limits_{\,j \,\in\, J}\, w_{j}^{\,2}\, \left\|\,\Gamma_{j}\,P_{\,W_{j}}\,(\,g\,) \,\right\|_{\,2}^{\,2} \,\leq\, D\,\left\|\, g \, \right\|_{\,2}^{\,2}\; \;\forall\; g \,\in\, K.
\end{equation}
Multiplying (\ref{eq1}) and (\ref{eq1.1}), and using the definition of norm on \,$H \,\otimes\, K$, we get
\[A\,C\,\left\|\,f \,\right\|_{\,1}^{\,2}\,\left\|\,g \,\right\|_{\,2}^{\,2} \,\leq\, \left(\,\sum\limits_{\,i \,\in\, I}\, v_{i}^{\,2} \,\left\|\,\Lambda_{i}\,P_{\,V_{i}}\,(\,f\,) \,\right\|_{\,1}^{\,2}\,\right)\,\left(\,\sum\limits_{\,j \,\in\, J}\, w_{j}^{\,2}\, \left\|\,\Gamma_{j}\,P_{\,W_{j}}\,(\,g\,) \,\right\|_{\,2}^{\,2}\,\right)\]
\[ \,\leq\, B\,D\,\left\|\,f \,\right\|_{\,1}^{\,2}\,\left\|\,g \,\right\|_{\,2}^{\,2}.\]
\[\Rightarrow\, A\,C\left\|\,f \,\otimes\, g\,\right\|^{\,2} \,\leq\, \sum\limits_{i,\, j}v^{\,2}_{\,i}\,w^{\,2}_{\,j}\,\left\|\,\Lambda_{i}\,P_{\,V_{i}}\,(\,f\,) \,\right\|_{\,1}^{\,2}\,\left\|\,\Gamma_{j}\,P_{\,W_{j}}\,(\,g\,) \,\right\|_{\,2}^{\,2} \,\leq\, B\,D\left\|\,f \,\otimes\, g\,\right\|^{\,2}\]
\[\Rightarrow\, A\,C\left\|\,f \,\otimes\, g\,\right\|^{\,2} \,\leq\, \sum\limits_{i,\, j}v^{\,2}_{\,i}\,w^{\,2}_{\,j}\,\left\|\,\Lambda_{i}\,P_{\,V_{i}}\,(\,f\,) \otimes\, \Gamma_{j}\,P_{\,W_{j}}\,(\,g\,) \,\right\|^{\,2} \,\leq\, B\,D \left\|\,f \,\otimes\, g\,\right\|^{\,2}.\]
Therefore, for all \,$f \,\otimes\, g \,\in\, H \,\otimes\, K$, using the Theorem (\ref{th1.1}), we get
\[A\,C\left\|\,f \otimes g\,\right\|^{\,2} \,\leq\, \sum\limits_{i,\, j}v^{\,2}_{\,i}\,w^{\,2}_{\,j}\left\|\,\left(\,\Lambda_{i} \otimes \Gamma_{j}\,\right)\,\left(\,P_{\,V_{i}} \otimes P_{\,W_{j}}\,\right)\,(\,f \otimes g\,) \,\right\|^{\,2} \,\leq\, B\,D\, \left\|\,f \otimes g\,\right\|^{\,2}\]
\[\Rightarrow\, A\,C\, \left\|\,f \otimes g\,\right\|^{\,2} \,\leq\, \sum\limits_{i,\, j}\,v^{\,2}_{\,i}\,w^{\,2}_{\,j}\,\left\|\,\left(\,\Lambda_{i} \otimes \Gamma_{j}\,\right)\,P_{\,V_{i} \,\otimes\, W_{j}}\,(\,f \otimes g\,)\,\right\|^{\,2} \,\leq\, B\,D\,\left\|\,f \otimes g\,\right\|^{\,2}.\]
Hence, \,$\Lambda \,\otimes\, \Gamma$\, is a \,$g$-fusion frame for \,$H \,\otimes\, K$\, with respect to \,$\left\{\,H_{i} \,\otimes\, K_{j}\,\right\}_{\,i,\,j}$\, with bounds \,$A\,C$\, and \,$B\,D$.\\ 

Conversely, suppose that \,$\Lambda \,\otimes\, \Gamma$\, is a \,$g$-fusion frame for \,$H \,\otimes\, K$\, with respect to \,$\left\{\,H_{i} \,\otimes\, K_{j}\,\right\}_{\,i,\,j}$.\;Then there exist constants \,$A,\, B \,>\, 0$\, such that for all \,$f \,\otimes\, g \,\in\, H \,\otimes\, K \,-\, \{\,\theta \,\otimes\, \theta\,\}$,
\[A\, \left\|\,f \,\otimes\, g\,\right\|^{\,2} \,\leq\, \sum\limits_{i,\, j}\,v^{\,2}_{\,i}\,w^{\,2}_{\,j}\,\left\|\left(\,\Lambda_{i} \,\otimes\, \Gamma_{j}\,\right)\,P_{\,V_{\,i} \,\otimes\, W_{\,j}}\,(\,f \,\otimes\, g\,)\,\right\|^{\,2} \,\leq\, B\, \left\|\,f \,\otimes\, g\,\right\|^{\,2}.\]
\[\Rightarrow\, A\,\left\|\,f \,\otimes\, g\,\right\|^{\,2} \,\leq\, \sum\limits_{i,\, j}\,v^{\,2}_{\,i}\,w^{\,2}_{\,j}\,\left\|\,\Lambda_{i}\,P_{\,V_{i}}\,(\,f\,) \otimes\, \Gamma_{j}\,P_{\,W_{j}}\,(\,g\,) \,\right\|^{\,2} \,\leq\, B\,\left\|\,f \,\otimes\, g\,\right\|^{\,2}.\]
Using (\ref{eq1.001}) and (\ref{eq1.0001}), we obtain 
\[A\left\|\,f \,\right\|_{\,1}^{\,2}\left\|\,g \,\right\|_{\,2}^{\,2} \,\leq\, \left(\,\sum\limits_{\,i \,\in\, I} v_{i}^{\,2}\left\|\,\Lambda_{i}\,P_{\,V_{i}}\,(\,f\,) \,\right\|_{\,1}^{\,2}\,\right)\left(\,\sum\limits_{\,j \,\in\, J} w_{j}^{\,2} \left\|\,\Gamma_{j}\,P_{\,W_{j}}\,(\,g\,) \,\right\|_{\,2}^{\,2}\,\right) \,\leq\, B\left\|\,f \,\right\|_{\,1}^{\,2}\left\|\,g \,\right\|_{\,2}^{\,2}.\]
Since \,$f \,\otimes\, g$\, is non-zero vector, \,$f$\, and \,$g$\, are also non-zero vectors and therefore \,$\sum\limits_{\,i \,\in\, I}\, v_{i}^{\,2} \,\left\|\,\Lambda_{i}\,P_{\,V_{i}}\,(\,f\,) \,\right\|_{\,1}^{\,2}$\, and \,$\sum\limits_{\,j \,\in\, J}\, w_{j}^{\,2}\, \left\|\,\Gamma_{j}\,P_{\,W_{j}}\,(\,g\,) \,\right\|_{\,2}^{\,2}$\, are non-zero.
\[\Rightarrow\, \dfrac{A\,\left\|\,g \,\right\|_{\,2}^{\,2}\,\left\|\,f \,\right\|_{\,1}^{\,2}}{\sum\limits_{\,j \,\in\, J}\, w_{j}^{\,2}\, \left\|\,\Gamma_{j}\,P_{\,W_{j}}\,(\,g\,) \,\right\|_{\,2}^{\,2}} \,\leq\, \sum\limits_{\,i \,\in\, I}\, v_{i}^{\,2} \,\left\|\,\Lambda_{i}\,P_{\,V_{i}}\,(\,f\,) \,\right\|_{\,1}^{\,2} \,\leq\, \dfrac{B\,\left\|\,g \,\right\|_{\,2}^{\,2}\,\left\|\,f \,\right\|_{\,1}^{\,2}}{\sum\limits_{\,j \,\in\, J}\, w_{j}^{\,2}\, \left\|\,\Gamma_{j}\,P_{\,W_{j}}\,(\,g\,) \,\right\|_{\,2}^{\,2}}\]
\[\Rightarrow\, A_{1} \,\left\|\,f \,\right\|_{\,1}^{\,2} \,\leq\, \sum\limits_{\,i \,\in\, I}\, v_{i}^{\,2} \,\left\|\,\Lambda_{i}\,P_{\,V_{i}}\,(\,f\,) \,\right\|_{\,1}^{\,2} \,\leq\, B_{1}\, \left\|\, f \, \right\|_{\,1}^{\,2}\; \;\forall\; f \,\in\, H,\]
where \,$A_{1} \,=\, \dfrac{A\,\left\|\,g \,\right\|_{\,2}^{\,2}}{\sum\limits_{\,j \,\in\, J}\, w_{j}^{\,2}\, \left\|\,\Gamma_{j}\,P_{\,W_{j}}\,(\,g\,) \,\right\|_{\,2}^{\,2}}$\, and \,$B_{1} \,=\, \dfrac{B\,\left\|\,g \,\right\|_{\,2}^{\,2}}{\sum\limits_{\,j \,\in\, J}\, w_{j}^{\,2}\, \left\|\,\Gamma_{j}\,P_{\,W_{j}}\,(\,g\,) \,\right\|_{\,2}^{\,2}}$.\;This shows that \,$\Lambda$\, is a \,$g$-fusion frame for \,$H$\, with respect to \,$\left\{\,H_{i}\,\right\}_{\, i \,\in\, I}$.\;Similarly, it can be shown that \,$\Gamma$\, is \,$g$-fusion frame for \,$K$\, with respect to \,$\left\{\,K_{j}\,\right\}_{\, j \,\in\, J}$.\;This completes the proof. 
\end{proof}

\begin{note}
Let \,$\Lambda \,\otimes\, \Gamma \,=\, \left\{\,\left(\,V_{i} \,\otimes\, W_{j},\, \Lambda_{i} \,\otimes\, \Gamma_{j},\, v_{\,i}\,w_{\,j}\,\right)\,\right\}_{\,i,\,j}$\, be a g-fusion frame for the Hilbert space \,$H \,\otimes\, K$.\;According to the definition (\ref{defn1}), the synthesis operator \,$T_{\,\Lambda \,\otimes\, \Gamma} \,:\, l^{\,2}\,\left(\,\left\{\,H_{i} \,\otimes\, K_{j}\,\right\}\,\right) \,\to\, H \,\otimes\, K$\, is given by 
\[T_{\,\Lambda \,\otimes\, \Gamma}\,\left(\,\left\{\,f_{\,i} \,\otimes\, g_{\,j}\,\right\}\,\right) \,=\, \sum\limits_{i,\, j}\,v_{\,i}\,w_{\,j}\,P_{\,V_{i} \,\otimes\, W_{j}}\,\left(\,\Lambda_{i} \,\otimes\, \Gamma_{j}\,\right)^{\,\ast}\,\left(\,f_{\,i} \,\otimes\, g_{\,j}\,\right)\]
for all \,$\left\{\,f_{\,i} \,\otimes\, g_{\,j}\,\right\} \,\in\, l^{\,2}\,\left(\,\left\{\,H_{i} \,\otimes\, K_{j}\,\right\}\,\right)$, and similarly the frame operator \,$S_{\,\Lambda \,\otimes\, \Gamma} \,:\, H \,\otimes\, K \,\to\, H \,\otimes\, K$\, is described by
\[S_{\,\Lambda \,\otimes\, \Gamma}\,(\,f \,\otimes\, g\,) \,=\, \sum\limits_{i,\, j}\,v^{\,2}_{\,i}\,w^{\,2}_{\,j}\,P_{\,V_{i} \,\otimes\, W_{j}}\,\left(\,\Lambda_{i} \,\otimes\, \Gamma_{j}\,\right)^{\,\ast}\,\left(\,\Lambda_{i} \,\otimes\, \Gamma_{j}\,\right)\,P_{\,V_{i} \,\otimes\, W_{j}}\,(\,f \,\otimes\, g\,)\]
for all \,$f \,\otimes\, g \,\in\, H \,\otimes\, K$.  
\end{note}

\begin{theorem}
If \,$S_{\Lambda}, S_{\Gamma}$\, and \,$S_{\Lambda \,\otimes\, \Gamma}$\, are the associated \,$g$-fusion frame operators and \,$T_{\Lambda}, T_{\Gamma}$\, and \,$T_{\Lambda \,\otimes\, \Gamma}$\, are the synthesis operators of g-fusion frames \,$\Lambda,\, \Gamma$\, and \,$\Lambda \,\otimes\, \Gamma$\, for \,$H,\, K$\, and \,$H \,\otimes\, K$, respectively, then \,$S_{\Lambda \,\otimes\, \Gamma} \,=\, S_{\Lambda} \,\otimes\, S_{\Gamma}\,,\; S^{\,-\, 1}_{\Lambda \,\otimes\, \Gamma} \,=\, S^{\,-\, 1}_{\Lambda} \,\otimes\, S^{\,-\, 1}_{\Gamma}$, and \,$T_{\Lambda \,\otimes\, \Gamma} \,=\, T_{\Lambda} \,\otimes\, T_{\Gamma},\; T^{\,\ast}_{\Lambda \,\otimes\, \Gamma} \,=\, T^{\,\ast}_{\Lambda} \,\otimes\, T^{\,\ast}_{\Gamma}$.
\end{theorem}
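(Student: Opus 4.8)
The plan is to verify each identity first on elementary tensors and then extend by boundedness and density, using throughout the algebraic rules for tensor products of operators in Theorem \ref{th1.1} together with the factorization $P_{\,V_{i} \,\otimes\, W_{j}} \,=\, P_{\,V_{i}} \,\otimes\, P_{\,W_{j}}$ noted above. I would treat the synthesis operators first, deduce the frame operator identity from $S \,=\, T\,T^{\,\ast}$, and finish with the inverse.

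For the synthesis operators, one first observes that the space $l^{\,2}\left(\left\{\,H_{i} \,\otimes\, K_{j}\,\right\}\right)$ is (via the identification spelled out in the Note above) canonically isometrically isomorphic to $l^{\,2}\left(\left\{\,H_{i}\,\right\}_{i\in I}\right) \,\otimes\, l^{\,2}\left(\left\{\,K_{j}\,\right\}_{j\in J}\right)$, with $\{\,f_{\,i} \,\otimes\, g_{\,j}\,\}$ corresponding to $\{\,f_{\,i}\,\}_{i\in I} \,\otimes\, \{\,g_{\,j}\,\}_{j\in J}$; under this identification $T_{\Lambda} \,\otimes\, T_{\Gamma}$ is a well-defined bounded operator into $H \,\otimes\, K$, so the asserted equality is meaningful. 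On an elementary tensor, using $\left(\,\Lambda_{i} \,\otimes\, \Gamma_{j}\,\right)^{\,\ast} \,=\, \Lambda_{i}^{\,\ast} \,\otimes\, \Gamma_{j}^{\,\ast}$ from Theorem \ref{th1.1}(V) and the composition rule $(\,Q \,\otimes\, T\,)\,(\,Q^{\,\prime} \,\otimes\, T^{\,\prime}\,) \,=\, (\,Q\,Q^{\,\prime}\,) \,\otimes\, (\,T\,T^{\,\prime}\,)$ from Theorem \ref{th1.1}(III), I would compute
\[ T_{\,\Lambda \,\otimes\, \Gamma}\left(\{\,f_{\,i} \,\otimes\, g_{\,j}\,\}\right) \,=\, \sum_{i,\,j} v_{\,i}\,w_{\,j}\,\left(\,P_{\,V_{i}}\,\Lambda_{i}^{\,\ast}\,f_{\,i}\,\right) \otimes \left(\,P_{\,W_{j}}\,\Gamma_{j}^{\,\ast}\,g_{\,j}\,\right) \,=\, \left(\sum_{i} v_{\,i}\,P_{\,V_{i}}\,\Lambda_{i}^{\,\ast}\,f_{\,i}\right) \otimes \left(\sum_{j} w_{\,j}\,P_{\,W_{j}}\,\Gamma_{j}^{\,\ast}\,g_{\,j}\right), \]
where the last step — interchanging the double sum with $\otimes$ — is justified by the bilinearity and joint continuity of the tensor map together with the norm convergence of the two individual series (valid since $\Lambda$ and $\Gamma$ are $g$-fusion Bessel sequences). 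The right-hand side is exactly $T_{\Lambda}\!\left(\{\,f_{\,i}\,\}\right) \otimes T_{\Gamma}\!\left(\{\,g_{\,j}\,\}\right) \,=\, (\,T_{\Lambda} \,\otimes\, T_{\Gamma}\,)\left(\{\,f_{\,i}\,\} \otimes \{\,g_{\,j}\,\}\right)$. Since finite linear combinations of such elementary tensors are dense and both sides are bounded, $T_{\,\Lambda \,\otimes\, \Gamma} \,=\, T_{\Lambda} \,\otimes\, T_{\Gamma}$, and then $T_{\,\Lambda \,\otimes\, \Gamma}^{\,\ast} \,=\, (\,T_{\Lambda} \,\otimes\, T_{\Gamma}\,)^{\,\ast} \,=\, T_{\Lambda}^{\,\ast} \,\otimes\, T_{\Gamma}^{\,\ast}$ by Theorem \ref{th1.1}(V).

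The frame operator identity is then immediate from $S \,=\, T\,T^{\,\ast}$ (Definition \ref{defn1}) and Theorem \ref{th1.1}(III): $S_{\,\Lambda \,\otimes\, \Gamma} \,=\, T_{\,\Lambda \,\otimes\, \Gamma}\,T_{\,\Lambda \,\otimes\, \Gamma}^{\,\ast} \,=\, (\,T_{\Lambda} \,\otimes\, T_{\Gamma}\,)\,(\,T_{\Lambda}^{\,\ast} \,\otimes\, T_{\Gamma}^{\,\ast}\,) \,=\, (\,T_{\Lambda}\,T_{\Lambda}^{\,\ast}\,) \,\otimes\, (\,T_{\Gamma}\,T_{\Gamma}^{\,\ast}\,) \,=\, S_{\Lambda} \,\otimes\, S_{\Gamma}$. (Alternatively, one can obtain this by applying the same $\otimes$-reductions directly to the explicit formula for $S_{\,\Lambda \,\otimes\, \Gamma}$ in the Note above.) Finally, since $\Lambda$ and $\Gamma$ are $g$-fusion frames, $S_{\Lambda}$ and $S_{\Gamma}$ are bounded and invertible, so Theorem \ref{th1.1}(IV) gives that $S_{\Lambda} \,\otimes\, S_{\Gamma}$ is invertible with $(\,S_{\Lambda} \,\otimes\, S_{\Gamma}\,)^{\,-\,1} \,=\, S_{\Lambda}^{\,-\,1} \,\otimes\, S_{\Gamma}^{\,-\,1}$; combining with $S_{\,\Lambda \,\otimes\, \Gamma} \,=\, S_{\Lambda} \,\otimes\, S_{\Gamma}$ yields $S_{\,\Lambda \,\otimes\, \Gamma}^{\,-\,1} \,=\, S_{\Lambda}^{\,-\,1} \,\otimes\, S_{\Gamma}^{\,-\,1}$. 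The only genuine obstacle is bookkeeping rather than conceptual: being careful about the identification of $l^{\,2}\left(\left\{\,H_{i} \,\otimes\, K_{j}\,\right\}\right)$ with $l^{\,2}\left(\left\{\,H_{i}\,\right\}\right) \,\otimes\, l^{\,2}\left(\left\{\,K_{j}\,\right\}\right)$ and justifying the rearrangement $\sum_{i,j}(\,a_{\,i} \,\otimes\, b_{\,j}\,) \,=\, \left(\sum_{i} a_{\,i}\right) \,\otimes\, \left(\sum_{j} b_{\,j}\right)$; everything else is a direct invocation of parts (III), (IV), (V) of Theorem \ref{th1.1} and the projection factorization.
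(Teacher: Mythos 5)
Your proposal is correct and follows essentially the same route as the paper: both verify the identities on elementary tensors by factoring \,$P_{\,V_{i} \,\otimes\, W_{j}} \,=\, P_{\,V_{i}} \,\otimes\, P_{\,W_{j}}$, splitting the double sum \,$\sum_{i,\,j}(\,a_{\,i} \,\otimes\, b_{\,j}\,)$\, into \,$\left(\sum_{i} a_{\,i}\right) \otimes \left(\sum_{j} b_{\,j}\right)$, and invoking parts (III)--(V) of Theorem (\ref{th1.1}), with (IV) giving the inverse. The only (harmless) difference is one of ordering and economy: you obtain \,$S_{\Lambda \,\otimes\, \Gamma} \,=\, S_{\Lambda} \,\otimes\, S_{\Gamma}$\, as a corollary of \,$T_{\Lambda \,\otimes\, \Gamma} \,=\, T_{\Lambda} \,\otimes\, T_{\Gamma}$\, via \,$S \,=\, T\,T^{\,\ast}$, and you make explicit the identification of \,$l^{\,2}\left(\left\{\,H_{i} \,\otimes\, K_{j}\,\right\}\right)$\, with \,$l^{\,2}\left(\left\{\,H_{i}\,\right\}\right) \otimes l^{\,2}\left(\left\{\,K_{j}\,\right\}\right)$\, and the convergence justification for the rearrangement, whereas the paper computes \,$S_{\Lambda \,\otimes\, \Gamma}$\, and \,$T_{\Lambda \,\otimes\, \Gamma}$\, independently and leaves those points implicit.
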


\begin{proof}
For each \,$f \,\otimes\, g \,\in\, H \,\otimes\, K$, we have
\[S_{\,\Lambda \,\otimes\, \Gamma}\,(\,f \,\otimes\, g\,) \,=\, \sum\limits_{i,\, j}\,v^{\,2}_{\,i}\,w^{\,2}_{\,j}\,P_{\,V_{i} \,\otimes\, W_{j}}\,\left(\,\Lambda_{i} \,\otimes\, \Gamma_{j}\,\right)^{\,\ast}\,\left(\,\Lambda_{i} \,\otimes\, \Gamma_{j}\,\right)\,P_{\,V_{i} \,\otimes\, W_{j}}\,(\,f \,\otimes\, g\,)\]
\[\,=\, \sum\limits_{i,\, j}\,v^{\,2}_{\,i}\,w^{\,2}_{\,j}\,\left(\,P_{\,V_{i}} \,\otimes\, P_{\,W_{j}}\,\right)\,\left(\,\Lambda^{\,\ast}_{i} \,\otimes\, \Gamma^{\,\ast}_{j}\,\right)\,\left(\,\Lambda_{i} \,\otimes\, \Gamma_{j}\,\right)\,\left(\,P_{\,V_{i}} \,\otimes\, P_{\,W_{j}}\,\right)\,(\,f \,\otimes\, g\,)\hspace{.4cm}\]
\[ \,=\, \sum\limits_{i,\, j}\,v^{\,2}_{\,i}\,w^{\,2}_{\,j}\,\left(\,P_{\,V_{i}}\,\Lambda^{\,\ast}_{i}\,\Lambda_{i}\,P_{\,V_{i}}\,(\,f\,) \,\otimes\, P_{\,W_{j}}\,\Gamma^{\,\ast}_{j}\,\Gamma_{j}\,P_{\,W_{j}}\,(\,g\,)\,\right)\;[\;\text{by Theorem (\ref{th1.1})}\;]\]
\[\,=\, \left(\,\sum\limits_{\,i \,\in\, I}\,v^{\,2}_{\,i}\,P_{\,V_{i}}\,\Lambda^{\,\ast}_{i}\,\Lambda_{i}\,P_{\,V_{i}}\,(\,f\,)\,\right) \,\otimes\, \left(\,\sum\limits_{\,j \,\in\, J}\,w^{\,2}_{j}\,P_{\,W_{j}}\,\Gamma^{\,\ast}_{j}\,\Gamma_{j}\,P_{\,W_{j}}\,(\,g\,)\,\right)\hspace{1.7cm}\]
\[=\, S_{\Lambda}\,(\,f\,) \,\otimes\, S_{\Gamma}\,(\,g\,) \,=\, \left(\,S_{\Lambda} \,\otimes\, S_{\Gamma}\,\right)\,(\,f \,\otimes\, g\,).\hspace{5.5cm}\] 
This implies that \,$S_{\,\Lambda \,\otimes\, \Gamma} \,=\, S_{\Lambda} \,\otimes\, S_{\Gamma}$.\;Since \,$S_{\Lambda}\, \;\text{and}\; \,S_{\Gamma}$\, are invertible, by \,$(\,IV\,)$\, of Theorem (\ref{th1.1}), it follows that \,$S^{\,-\, 1}_{\Lambda \,\otimes\, \Gamma} \,=\, S^{\,-\, 1}_{\Lambda} \,\otimes\, S^{\,-\, 1}_{\Gamma}$.\;On the other hand, for all \,$\left\{\,f_{\,i} \,\otimes\, g_{\,j}\,\right\} \,\in\, l^{\,2}\,\left(\,\left\{\,H_{i} \,\otimes\, K_{j}\,\right\}\,\right)$, we have
\[T_{\,\Lambda \,\otimes\, \Gamma}\,\left(\,\left\{\,f_{\,i} \,\otimes\, g_{\,j}\,\right\}\,\right) \,=\, \sum\limits_{i,\, j}\,v_{\,i}\,w_{\,j}\,P_{\,V_{i} \,\otimes\, W_{j}}\,\left(\,\Lambda_{i} \,\otimes\, \Gamma_{j}\,\right)^{\,\ast}\,\left(\,f_{\,i} \,\otimes\, g_{\,j}\,\right)\]
\[=\, \left(\,\sum\limits_{\,i \,\in\, I}\,v_{\,i}\,P_{\,V_{i}}\,\Lambda^{\,\ast}_{i}\,f_{\,i}\,\right) \,\otimes\, \left(\,\sum\limits_{\,j \,\in\, J}\,w_{j}\,P_{\,W_{j}}\,\Gamma^{\,\ast}_{j}\,g_{\,j}\,\right)\]
\[\hspace{1.5cm} \,=\, T_{\Lambda}\,\left(\,\left\{\,f_{\,i}\,\right\}\,\right) \,\otimes\, T_{\Gamma}\,\left(\,\left\{\,g_{\,j}\,\right\}\,\right) \,=\, \left(\,T_{\Lambda} \,\otimes\, T_{\Gamma}\,\right)\, \left(\,\left\{\,f_{\,i} \,\otimes\, g_{\,j}\,\right\}\,\right).\]
This shows that \,$T_{\,\Lambda \,\otimes\, \Gamma} \,=\, T_{\Lambda} \,\otimes\, T_{\Gamma}$.\;Again by \,$(\,V\,)$\, of Theorem (\ref{th1.1}), it follows that \,$T^{\,\ast}_{\Lambda \,\otimes\, \Gamma} \,=\, T^{\,\ast}_{\Lambda} \,\otimes\, T^{\,\ast}_{\Gamma}$.\;This completes the proof.     
\end{proof}

\begin{theorem}\label{th2}
Let \,$\Lambda \,=\, \left\{\,\left(\,V_{i},\, \Lambda_{i},\, v_{\,i}\,\right)\,\right\}_{\, i \,\in\, I}$\, and \,$\Gamma \,=\, \left\{\,\left(\,W_{j},\, \Gamma_{j},\, w_{\,j}\,\right)\,\right\}_{ j \,\in\, J}$\, be g-fusion frames for \,$H$\, and \,$K$\, with g-fusion frame operators \,$S_{\Lambda}$\, and \,$S_{\Gamma}$, respectively.\;Then \,$\Theta \,=\, \left\{\,S^{\,-\, 1}_{\,\Lambda \,\otimes\, \Gamma}\,\left(\,V_{i} \,\otimes\, W_{j}\,\right),\, \left(\,\Lambda_{i} \,\otimes\, \Gamma_{j}\,\right)\,P_{\,V_{i} \,\otimes\, W_{j}}\,S^{\,-\, 1}_{\,\Lambda \,\otimes\, \Gamma}\,,\, v_{\,i}\,w_{\,j}\,\right\}_{\,i,\,j}$\, is a g-fusion frame for \,$H \,\otimes\, K$.  
\end{theorem}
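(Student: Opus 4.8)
The plan is to compute the $g$-fusion frame operator of $\Theta$ explicitly and to show that it coincides with $S^{-1}_{\Lambda\otimes\Gamma}$; the frame bounds for $\Theta$ then fall out of the order relations satisfied by that operator. First I would collect the preliminaries already available: by Theorem \ref{th1.2} the family $\Lambda\otimes\Gamma$ is a $g$-fusion frame for $H\otimes K$ with bounds $AC$ and $BD$, where $A,B$ and $C,D$ are the bounds of $\Lambda$ and $\Gamma$; by the preceding theorem its frame operator satisfies $S_{\Lambda\otimes\Gamma}=S_\Lambda\otimes S_\Gamma$, hence it is bounded, self-adjoint, positive and invertible with $S^{-1}_{\Lambda\otimes\Gamma}=S^{-1}_\Lambda\otimes S^{-1}_\Gamma$. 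Since $S^{-1}_{\Lambda\otimes\Gamma}$ is a homeomorphism of $H\otimes K$, each $S^{-1}_{\Lambda\otimes\Gamma}(V_i\otimes W_j)$ is a closed subspace and each $(\Lambda_i\otimes\Gamma_j)P_{V_i\otimes W_j}S^{-1}_{\Lambda\otimes\Gamma}$ is bounded, so $\Theta$ is well defined, and it is at least a $g$-fusion Bessel sequence (its Bessel sum is dominated by $\|S^{-1}_{\Lambda\otimes\Gamma}\|^{2}$ times the Bessel bound of $\Lambda\otimes\Gamma$); thus its frame operator $S_\Theta$ exists.

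The core of the argument is to evaluate
\[
S_\Theta(F)=\sum_{i,j}v_i^{\,2}w_j^{\,2}\,P_{S^{-1}_{\Lambda\otimes\Gamma}(V_i\otimes W_j)}\bigl[(\Lambda_i\otimes\Gamma_j)P_{V_i\otimes W_j}S^{-1}_{\Lambda\otimes\Gamma}\bigr]^{\ast}\bigl[(\Lambda_i\otimes\Gamma_j)P_{V_i\otimes W_j}S^{-1}_{\Lambda\otimes\Gamma}\bigr]P_{S^{-1}_{\Lambda\otimes\Gamma}(V_i\otimes W_j)}F .
\]
Since $P_{V_i\otimes W_j}$ and $S^{-1}_{\Lambda\otimes\Gamma}$ are self-adjoint, $\bigl[(\Lambda_i\otimes\Gamma_j)P_{V_i\otimes W_j}S^{-1}_{\Lambda\otimes\Gamma}\bigr]^{\ast}=S^{-1}_{\Lambda\otimes\Gamma}P_{V_i\otimes W_j}(\Lambda_i\otimes\Gamma_j)^{\ast}$, and the decisive step is to erase the two outer projections. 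For this I would apply Theorem \ref{th0.001} with the closed subspace $V_i\otimes W_j$ and the self-adjoint operator $T=S^{-1}_{\Lambda\otimes\Gamma}$, noting $\overline{T(V_i\otimes W_j)}=S^{-1}_{\Lambda\otimes\Gamma}(V_i\otimes W_j)$: this yields $P_{V_i\otimes W_j}S^{-1}_{\Lambda\otimes\Gamma}=P_{V_i\otimes W_j}S^{-1}_{\Lambda\otimes\Gamma}P_{S^{-1}_{\Lambda\otimes\Gamma}(V_i\otimes W_j)}$, and by taking adjoints $S^{-1}_{\Lambda\otimes\Gamma}P_{V_i\otimes W_j}=P_{S^{-1}_{\Lambda\otimes\Gamma}(V_i\otimes W_j)}S^{-1}_{\Lambda\otimes\Gamma}P_{V_i\otimes W_j}$. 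Substituting both identities, using boundedness of $S^{-1}_{\Lambda\otimes\Gamma}$ to pull it out of the sum, and recognising the remaining sum as $S_{\Lambda\otimes\Gamma}$ (by the Note recalling the form of $S_{\Lambda\otimes\Gamma}$), I obtain
\[
S_\Theta(F)=S^{-1}_{\Lambda\otimes\Gamma}\Bigl(\sum_{i,j}v_i^{\,2}w_j^{\,2}\,P_{V_i\otimes W_j}(\Lambda_i\otimes\Gamma_j)^{\ast}(\Lambda_i\otimes\Gamma_j)P_{V_i\otimes W_j}\Bigr)S^{-1}_{\Lambda\otimes\Gamma}F=S^{-1}_{\Lambda\otimes\Gamma}\,S_{\Lambda\otimes\Gamma}\,S^{-1}_{\Lambda\otimes\Gamma}F=S^{-1}_{\Lambda\otimes\Gamma}F .
\]

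To finish, from $AC\,I_{H\otimes K}\leq S_{\Lambda\otimes\Gamma}\leq BD\,I_{H\otimes K}$ I get $(BD)^{-1}I_{H\otimes K}\leq S_\Theta=S^{-1}_{\Lambda\otimes\Gamma}\leq(AC)^{-1}I_{H\otimes K}$, and since $\langle S_\Theta F,F\rangle$ is precisely the middle sum of the $g$-fusion frame inequality for $\Theta$ (by the analogue, for $\Theta$, of the Note after Definition \ref{defn1}), it follows that $\Theta$ is a $g$-fusion frame for $H\otimes K$ with bounds $(BD)^{-1}$ and $(AC)^{-1}$. A shorter alternative I would mention is that $S^{-1}_{\Lambda\otimes\Gamma}(V_i\otimes W_j)=S^{-1}_\Lambda V_i\otimes S^{-1}_\Gamma W_j$ and $(\Lambda_i\otimes\Gamma_j)P_{V_i\otimes W_j}S^{-1}_{\Lambda\otimes\Gamma}=(\Lambda_iP_{V_i}S^{-1}_\Lambda)\otimes(\Gamma_jP_{W_j}S^{-1}_\Gamma)$ (from Theorem \ref{th1.1} and $P_{V_i\otimes W_j}=P_{V_i}\otimes P_{W_j}$), so that $\Theta$ is exactly the tensor product of the canonical dual $g$-fusion frame of $\Lambda$ with that of $\Gamma$, whence Theorem \ref{th1.2} applies directly. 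The only genuinely delicate point in either route is the use of Theorem \ref{th0.001} with $T=S^{-1}_{\Lambda\otimes\Gamma}$: one must invoke its first, non-unitary, conclusion and use that $T(V_i\otimes W_j)$ is already closed so that the projections truly cancel; the rest is bookkeeping with the tensor identities of Theorem \ref{th1.1}.
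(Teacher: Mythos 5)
Your argument is correct, but it follows a genuinely different route from the paper's. The paper proves the two frame inequalities for $\Theta$ by hand: the upper bound comes from the estimate $\sum_{i,j}v_i^{\,2}w_j^{\,2}\|(\Lambda_i\otimes\Gamma_j)P_{V_i\otimes W_j}S^{-1}_{\Lambda\otimes\Gamma}P_{S^{-1}_{\Lambda\otimes\Gamma}(V_i\otimes W_j)}(f\otimes g)\|^2\leq BD\,\|S^{-1}_\Lambda\|^2\|S^{-1}_\Gamma\|^2\|f\otimes g\|^2$, and the lower bound from a Cauchy--Schwarz trick applied to $\|f\otimes g\|^4=|\langle S_{\Lambda\otimes\Gamma}S^{-1}_{\Lambda\otimes\Gamma}(f\otimes g),f\otimes g\rangle|^2$, yielding bounds $\tfrac{1}{BD}$ and $\tfrac{BD}{A^2C^2}$. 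You instead first identify the frame operator of $\Theta$ as $S^{-1}_{\Lambda\otimes\Gamma}$ and read the bounds off the operator inequality $(BD)^{-1}I\leq S^{-1}_{\Lambda\otimes\Gamma}\leq (AC)^{-1}I$; the computation you perform (cancelling the outer projections via Theorem \ref{th0.001} applied to the self-adjoint $T=S^{-1}_{\Lambda\otimes\Gamma}$, whose range on each $V_i\otimes W_j$ is closed) is in substance exactly the paper's proof of the \emph{Result} that immediately follows the theorem, so you have in effect inverted the paper's logical order: you prove the Result first and deduce the Theorem from it. This buys you sharper bounds, since $(AC)^{-1}\leq BD/(A^2C^2)$, and it avoids the Cauchy--Schwarz step entirely; the only thing you must supply in exchange is the preliminary observation that $\Theta$ is Bessel (so that $\langle S_\Theta F,F\rangle$ equals the Bessel sum and the Note after Definition \ref{defn1} applies), which you do correctly. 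Your alternative route --- recognising $\Theta$ as the tensor product of the canonical dual $g$-fusion frames of $\Lambda$ and $\Gamma$ and invoking Theorem \ref{th1.2} --- is also valid and is arguably the cleanest argument of all, resting only on the identity $(Q\otimes T)(V\otimes W)=QV\otimes TW$ for invertible $Q,T$ and on the fact (taken from the cited literature in Definition \ref{defn2}) that the canonical dual is itself a $g$-fusion frame.
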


\begin{proof}
Let \,$(\,A,\, B\,)$\, and \,$(\,C,\, D\,)$\, be the \,$g$-fusion frame bounds of \,$\Lambda$\, and \,$\Gamma$, respectively.\;Now, for each \,$f \,\otimes\, g \,\in\, H \,\otimes\, K$, we have
\[\sum\limits_{i,\, j}\,v^{\,2}_{\,i}\,w^{\,2}_{\,j}\,\left\|\left(\,\Lambda_{i} \,\otimes\, \Gamma_{j}\,\right)\,P_{\,V_{i} \,\otimes\, W_{j}}\,S^{\,-\, 1}_{\Lambda \,\otimes\, \Gamma}\,P_{\,S^{\,-\, 1}_{\Lambda \,\otimes\, \Gamma}\,\left(\,V_{i} \,\otimes\, W_{j}\,\right)}\,(\,f \,\otimes\, g\,)\,\right\|^{\,2}\]
\[=\, \sum\limits_{i,\, j}v^{\,2}_{\,i}\,w^{\,2}_{\,j}\left\|\left(\,\Lambda_{i} \otimes \Gamma_{j}\,\right)\,\left(\,P_{\,V_{\,i}} \otimes P_{W_{j}}\,\right)\,\left(\,S^{\,-\, 1}_{\Lambda} \otimes S^{\,-\, 1}_{\Gamma}\,\right)\,\left(\,P_{\,S^{\,-\, 1}_{\Lambda}\,V_{i}} \otimes P_{\,S^{\,-\, 1}_{\Gamma}\,W_{j}}\,\right)\,(\,f \otimes g\,)\,\right\|^{\,2}\]
\[=\, \sum\limits_{i,\, j}\,v^{\,2}_{\,i}\,w^{\,2}_{\,j}\,\left\|\,\Lambda_{i}\,P_{\,V_{\,i}}\,S^{\,-\, 1}_{\Lambda}\,P_{\,S^{\,-\, 1}_{\Lambda}\,V_{i}}\,(\,f\,) \otimes \Gamma_{j}\,P_{W_{j}}\,S^{\,-\, 1}_{\Gamma}\,P_{\,S^{\,-\, 1}_{\Gamma}\,W_{j}}\,(\,g\,)\,\right\|^{\,2}\;[\;\text{by Theorem (\ref{th1.1})}\;]\]
\[=\, \left(\,\sum\limits_{\,i \,\in\, I}\,v^{\,2}_{\,i}\,\left\|\,\Lambda_{i}\,P_{\,V_{\,i}}\,S^{\,-\, 1}_{\Lambda}\,P_{\,S^{\,-\, 1}_{\Lambda}\,V_{i}}\,(\,f\,)\,\right\|_{1}^{\,2}\,\right)\,\left(\,\sum\limits_{\,j \,\in\, J}\,w^{\,2}_{\,j}\,\left\|\,\Gamma_{j}\,P_{W_{j}}\,S^{\,-\, 1}_{\Gamma}\,P_{\,S^{\,-\, 1}_{\Gamma}\,W_{j}}\,(\,g\,)\,\right\|^{\,2}_{2}\,\right)\]
\[=\, \left(\,\sum\limits_{\,i \,\in\, I}\,v^{\,2}_{\,i}\,\left\|\,\Lambda_{i}\,P_{\,V_{\,i}}\,S^{\,-\, 1}_{\Lambda}\,(\,f\,)\,\right\|_{1}^{\,2}\,\right)\,\left(\,\sum\limits_{\,j \,\in\, J}\,w^{\,2}_{\,j}\,\left\|\,\Gamma_{j}\,P_{W_{j}}\,S^{\,-\, 1}_{\Gamma}\,(\,g\,)\,\right\|^{\,2}_{2}\,\right)\; [\;\text{by Theorem (\ref{th0.001})}\;]\]
\[\leq\, B\,\left\|\,S^{\,-\, 1}_{\Lambda}\,(\,f\,)\,\right\|^{\,2}_{1}\,D\,\left\|\,S^{\,-\, 1}_{\Gamma}\,(\,g\,)\,\right\|^{\,2}_{2}\; \;[\;\text{since $\Lambda,\,\Gamma$ are $g$-fusion frames}\;]\hspace{2.3cm}\]
\[\leq\, B\,D\,\left\|\,S^{\,-\, 1}_{\Lambda}\,\right\|^{\,2}\,\|\,f\,\|^{\,2}_{1}\,\left\|\,S^{\,-\, 1}_{\Gamma}\,\right\|^{\,2}\,\|\,g\,\|_{2} \,\leq\, \dfrac{B\,D}{A^{\,2}\,C^{\,2}}\,\|\,f \,\otimes\, g\,\|^{\,2}.\hspace{3.6cm}\]
\[[\;\text{since $B^{\,-1}\,I_{H} \,\leq\, S^{\,-1}_{\Lambda} \,\leq\, A^{\,-1}\,I_{H}$ and $D^{\,-1}\,I_{K} \,\leq\, S^{\,-1}_{\Gamma} \,\leq\, C^{\,-1}\,I_{K}$}\;].\]
On the other hand, again using Theorem (\ref{th0.001})
\[\|\,f \,\otimes\, g\,\|^{\,4} \,=\, \left|\,\left<\,f \,\otimes\, g \,,\, f \,\otimes\, g\,\right>\,\right|^{\,2} \,=\, \left|\,\left<\,f \,,\, f\,\right>_{1}\,\right|^{\,2}\, \left|\,\left<\,g \,,\, g\,\right>_{2}\,\right|^{\,2}\]
\[=\, \left|\,\left<\,\sum\limits_{\,i \,\in\, I}\,v^{\,2}_{\,i}\,P_{\,V_{i}}\,\Lambda^{\,\ast}_{i}\,\Lambda_{i}\,P_{\,V_{i}}\,S^{\,-\, 1}_{\Lambda}\,(\,f\,) \,,\, f\,\right>_{1}\,\right|^{\,2}\,\left|\,\left<\,\sum\limits_{\,j \,\in\, J}\,w^{\,2}_{j}\,P_{\,W_{j}}\,\Gamma^{\,\ast}_{j}\,\Gamma_{j}\,P_{\,W_{j}}\,S_{\Gamma}^{\,-\, 1}\,(\,g\,) \,,\, g\,\right>_{2}\,\right|^{\,2}\]
\[=\, \left|\,\sum\limits_{\,i \,\in\, I}\,v^{\,2}_{\,i}\,\left<\,\Lambda_{i}\,P_{\,V_{i}}\,S^{\,-\, 1}_{\Lambda}\,(\,f\,) \,,\, \Lambda_{i}\,P_{\,V_{i}}\,(\,f\,)\,\right>_{1}\,\right|^{\,2}\,\left|\,\sum\limits_{\,j \,\in\, J}\,w^{\,2}_{j}\,\left<\,\Gamma_{j}\,P_{\,W_{j}}\,S_{\Gamma}^{\,-\, 1}\,(\,g\,) \,,\, \Gamma_{j}\,P_{\,W_{j}}\,(\,g\,)\,\right>_{2}\,\right|^{\,2}\hspace{1.2cm}\]
\[\leq\, \left(\,\sum\limits_{\,i \,\in\, I}\,v^{\,2}_{\,i}\,\left\|\,\Lambda_{i}\,P_{\,V_{i}}\,S^{\,-\, 1}_{\Lambda}\,(\,f\,)\,\right\|^{\,2}_{1}\,\right)\,\left(\,\sum\limits_{\,i \,\in\, I}\,v^{\,2}_{\,i}\,\left\|\,\Lambda_{i}\,P_{\,V_{i}}\,(\,f\,)\,\right\|^{\,2}_{1}\,\right)\,\left(\,\sum\limits_{\,j \,\in\, J}\,w^{\,2}_{\,j}\,\left\|\,\Gamma_{j}\,P_{\,W_{j}}\,S_{\Gamma}^{\,-\, 1}\,(\,g\,)\,\right\|^{\,2}_{2}\,\right)\times\]
\[\hspace{4.2cm}\left(\,\sum\limits_{\,j \,\in\, J}\,w^{\,2}_{\,j}\,\left\|\,\Gamma_{j}\,P_{\,W_{j}}\,(\,g\,)\,\right\|^{\,2}_{2}\,\right)\; \;[\;\text{by C-S inequality}\;]\]
\[\leq\, B\,D\,\|\,f\,\|^{\,2}_{1}\,\|\,g\,\|_{2}\,\left(\,\sum\limits_{\,i \,\in\, I}\,v^{\,2}_{\,i}\,\left\|\,\Lambda_{i}\,P_{\,V_{i}}\,S^{\,-\, 1}_{\Lambda}\,P_{\,S^{\,-\, 1}_{\Lambda}\,V_{i}}\,(\,f\,)\,\right\|^{\,2}_{1}\,\right)\,\left(\,\sum\limits_{\,j \,\in\, J}\,w^{\,2}_{\,j}\,\left\|\,\Gamma_{j}\,P_{\,W_{j}}\,S_{\Gamma}^{\,-\, 1}\,P_{\,S^{\,-\, 1}_{\Gamma}\,W_{j}}\,(\,g\,)\,\right\|^{\,2}_{2}\,\right)\]
\[=\, B\,D\,\|\,f \,\otimes\, g\,\|^{\,2}\,\sum\limits_{i,\, j}\,v^{\,2}_{\,i}\,w^{\,2}_{\,j}\,\left\|\,\Lambda_{i}\,P_{\,V_{\,i}}\,S^{\,-\, 1}_{\Lambda}\,P_{\,S^{\,-\, 1}_{\Lambda}\,V_{i}}\,(\,f\,) \,\otimes\, \Gamma_{j}\,P_{W_{\,j}}\,S^{\,-\, 1}_{\Gamma}\,P_{\,S^{\,-\, 1}_{\Gamma}\,W_{j}}\,(\,g\,)\,\right\|^{\,2}\]
\[=\, B\,D\,\|\,f \,\otimes\, g\,\|^{\,2}\,\sum\limits_{i,\, j}\,v^{\,2}_{\,i}\,w^{\,2}_{\,j}\,\left\|\left(\,\Lambda_{i} \,\otimes\, \Gamma_{j}\,\right)\,P_{\,V_{\,i} \,\otimes\, W_{\,j}}\,S^{\,-\, 1}_{\Lambda \,\otimes\, \Gamma}\,P_{\,S^{\,-\, 1}_{\Lambda \,\otimes\, \Gamma}\,\left(\,V_{\,i} \,\otimes\, W_{\,j}\,\right)}\,(\,f \,\otimes\, g\,)\,\right\|^{\,2}.\]This implies that for all \,$f \,\otimes\, g \,\in\, H \,\otimes\, K$, we get 
\[\dfrac{1}{B\,D}\,\|\,f \,\otimes\, g\,\|^{\,2} \,\leq\, \sum\limits_{i,\, j}\,v^{\,2}_{\,i}\,w^{\,2}_{\,j}\,\left\|\left(\,\Lambda_{i} \,\otimes\, \Gamma_{j}\,\right)\,P_{\,V_{\,i} \,\otimes\, W_{\,j}}\,S^{\,-\, 1}_{\Lambda \,\otimes\, \Gamma}\,P_{\,S^{\,-\, 1}_{\Lambda \,\otimes\, \Gamma}\,\left(\,V_{\,i} \,\otimes\, W_{\,j}\,\right)}\,(\,f \,\otimes\, g\,)\,\right\|^{\,2}.\]
Hence, \,$\Theta$\, is a \,$g$-fusion frame for \,$H \,\otimes\, K$\, with bounds \,$\dfrac{1}{B\,D}$\, and \,$\dfrac{B\,D}{A^{\,2}\,C^{\,2}}$. 
\end{proof}

\begin{result}
For the g-fusion frame \,$\Theta$, frame operator is \,$S^{\,-\, 1}_{\,\Lambda \,\otimes\, \Gamma}$.
\end{result}

\begin{proof}
Take \,$\Delta \,=\, \left(\,\Lambda_{i} \,\otimes\, \Gamma_{j}\,\right)\,P_{\,V_{i} \,\otimes\, W_{j}}\,S^{\,-\, 1}_{\,\Lambda \,\otimes\, \Gamma}$.\;Then 
\[\Delta^{\,\ast}\,\Delta \,=\, \left(\,\left(\,\Lambda_{i} \,\otimes\, \Gamma_{j}\,\right)\,P_{\,V_{i} \,\otimes\, W_{j}}\,S^{\,-\, 1}_{\,\Lambda \,\otimes\, \Gamma}\,\right)^{\,\ast}\,\left(\,\Lambda_{i} \,\otimes\, \Gamma_{j}\,\right)\,P_{\,V_{i} \,\otimes\, W_{j}}\,S^{\,-\, 1}_{\,\Lambda \,\otimes\, \Gamma}\]
\[=\, \left(\,S^{\,-\, 1}_{\Lambda} \,\otimes\, S^{\,-\, 1}_{\Gamma}\,\right)\,\left(\,P_{\,V_{i}} \,\otimes\, P_{W_{j}}\,\right)\,\left(\,\Lambda^{\,\ast}_{i} \,\otimes\, \Gamma^{\,\ast}_{j}\,\right)\,\left(\,\Lambda_{i} \,\otimes\, \Gamma_{j}\,\right)\,\left(\,P_{\,V_{i}} \,\otimes\, P_{W_{j}}\,\right)\,\left(\,S^{\,-\, 1}_{\Lambda} \,\otimes\, S^{\,-\, 1}_{\Gamma}\,\right)\]
\[=\, S^{\,-\, 1}_{\Lambda}\,P_{\,V_{i}}\,\Lambda^{\,\ast}_{i}\,\Lambda_{i}\,P_{\,V_{i}}\,S^{\,-\, 1}_{\Lambda} \,\otimes\, S^{\,-\, 1}_{\Gamma}\,P_{W_{j}}\,\Gamma^{\,\ast}_{j}\,\Gamma_{j}\,P_{W_{j}}\,S^{\,-\, 1}_{\Gamma}\; \;[\;\text{by Theorem (\ref{th1.1})}\;].\hspace{2cm}\]
Now, \,$P_{\,S^{\,-\, 1}_{\Lambda \,\otimes\, \Gamma}\,\left(\,V_{\,i} \,\otimes\, W_{j}\,\right)}\,\Delta^{\,\ast}\,\Delta\,P_{\,S^{\,-\, 1}_{\Lambda \,\otimes\, \Gamma}\,\left(\,V_{i} \,\otimes\, W_{j}\,\right)}$
\[=\, \left(\,P_{\,S^{\,-\, 1}_{\Lambda}\,V_{i}} \,\otimes\, P_{\,S^{\,-\, 1}_{\Gamma}\,W_{j}}\,\right)\,\Delta^{\,\ast}\,\Delta\,\left(\,P_{\,S^{\,-\, 1}_{\Lambda}\,V_{i}} \,\otimes\, P_{\,S^{\,-\, 1}_{\Gamma}\,W_{j}}\,\right)\hspace{4.7cm}\]
\[=\, P_{\,S^{\,-\, 1}_{\Lambda}\,V_{i}}\,S^{\,-\, 1}_{\Lambda}\,P_{\,V_{\,i}}\,\Lambda^{\,\ast}_{i}\,\Lambda_{i}\,P_{\,V_{\,i}}\,S^{\,-\, 1}_{\Lambda}\,P_{\,S^{\,-\, 1}_{\Lambda}\,V_{i}} \,\otimes\, P_{\,S^{\,-\, 1}_{\Gamma}\,W_{j}}\,S^{\,-\, 1}_{\Gamma}\,P_{W_{j}}\,\Gamma^{\,\ast}_{j}\,\Gamma_{j}\,P_{W_{j}}\,S^{\,-\, 1}_{\Gamma}\,P_{\,S^{\,-\, 1}_{\Gamma}\,W_{j}}\]
\[=\, \left(\,P_{\,V_{i}}\,S^{\,-\, 1}_{\Lambda}\,\right)^{\,\ast}\,\Lambda^{\,\ast}_{i}\,\Lambda_{i}\,P_{\,V_{i}}\,S^{\,-\, 1}_{\Lambda} \;\otimes\; \left(\,P_{\,W_{j}}\,S^{\,-\, 1}_{\Gamma}\,\right)^{\,\ast}\,\Gamma^{\,\ast}_{j}\,\Gamma_{j}\,P_{\,W_{j}}\,S^{\,-\, 1}_{\Gamma}\; [\;\text{by Theorem (\ref{th0.001})}\;]\]
\[=\, S^{\,-\, 1}_{\Lambda}\,P_{\,V_{\,i}}\,\Lambda^{\,\ast}_{i}\,\Lambda_{i}\,P_{\,V_{\,i}}\,S^{\,-\, 1}_{\Lambda} \;\otimes\; S^{\,-\, 1}_{\Gamma}\,P_{\,W_{j}}\,\Gamma^{\,\ast}_{j}\,\Gamma_{j}\,P_{\,W_{j}}\,S^{\,-\, 1}_{\Gamma}.\hspace{5.2cm}\]
Therefore, for each \,$f \,\otimes\, g \,\in\, H \,\otimes\, K$, we have
\[\sum\limits_{i,\, j}\,v^{\,2}_{\,i}\,w^{\,2}_{\,j}\,P_{\,S^{\,-\, 1}_{\Lambda \,\otimes\, \Gamma}\,\left(\,V_{i} \,\otimes\, W_{j}\,\right)}\,\Delta^{\,\ast}\,\Delta\,P_{\,S^{\,-\, 1}_{\Lambda \,\otimes\, \Gamma}\,\left(\,V_{i} \,\otimes\, W_{j}\,\right)}\,(\,f \,\otimes\, g\,)\]
\[=\, \sum\limits_{i,\, j}\,v^{\,2}_{\,i}\,w^{\,2}_{\,j}\,\left(\,S^{\,-\, 1}_{\Lambda}\,P_{\,V_{i}}\,\Lambda^{\,\ast}_{i}\,\Lambda_{i}\,P_{\,V_{\,i}}\,S^{\,-\, 1}_{\Lambda} \;\otimes\; S^{\,-\, 1}_{\Gamma}\,P_{\,W_{j}}\,\Gamma^{\,\ast}_{j}\,\Gamma_{j}\,P_{\,W_{j}}\,S^{\,-\, 1}_{\Gamma}\,\right)\,(\,f \,\otimes\, g\,)\hspace{1.7cm}\]
\[=\, \left(\,\sum\limits_{\,i \,\in\, I}\,v^{\,2}_{\,i}\,S^{\,-\, 1}_{\Lambda}\,P_{\,V_{\,i}}\,\Lambda^{\,\ast}_{i}\,\Lambda_{i}\,P_{\,V_{\,i}}\,S^{\,-\, 1}_{\Lambda}\,(\,f\,)\,\right) \,\otimes\, \left(\,\sum\limits_{\,j \,\in\, J}\,w^{\,2}_{\,j}\,S^{\,-\, 1}_{\Gamma}\,P_{\,W_{j}}\,\Gamma^{\,\ast}_{j}\,\Gamma_{j}\,P_{\,W_{j}}\,S^{\,-\, 1}_{\Gamma}\,(\,g\,)\,\right)\]
\[=\, S^{\,-\, 1}_{\Lambda}\,S_{\Lambda}\,\left(\,S^{\,-\, 1}_{\Lambda}\,(\,f\,)\,\right) \,\otimes\, S^{\,-\, 1}_{\Gamma}\,S_{\Gamma}\,\left(\,S^{\,-\, 1}_{\Gamma}\,(\,g\,)\,\right)\hspace{6.3cm}\]
\[=\, S^{\,-\, 1}_{\Lambda}\,(\,f\,) \,\otimes\, S^{\,-\, 1}_{\Gamma}\,(\,g\,) \,=\, \left(\,S^{\,-\, 1}_{\Lambda} \,\otimes\, S^{\,-\, 1}_{\Gamma}\,\right)\,(\,f \,\otimes\, g\,) \,=\, S^{\,-\, 1}_{\,\Lambda \,\otimes\, \Gamma}\,(\,f \,\otimes\, g\,).\hspace{2cm}\]
This shows that the corresponding \,$g$-fusion frame operator for \,$\Theta$\, is \,$S^{\,-\, 1}_{\,\Lambda \,\otimes\, \Gamma}$. 
\end{proof}

\begin{remark}
According to the definition (\ref{defn2}), the g-fusion frame \,$\Theta \,=\, $\\
$\left\{\,S^{\,-\, 1}_{\,\Lambda \,\otimes\, \Gamma}\,\left(\,V_{i} \,\otimes\, W_{j}\,\right),\, \left(\,\Lambda_{i} \,\otimes\, \Gamma_{j}\,\right)\,P_{\,V_{i} \,\otimes\, W_{j}}\,S^{\,-\, 1}_{\,\Lambda \,\otimes\, \Gamma},\, v_{\,i}\,w_{\,j}\,\right\}_{\,i,\,j}$\, is called the canonical dual g-fusion frame of \,$\Lambda \,\otimes\, \Gamma$.  
\end{remark}

\begin{theorem}
Let \,$\Lambda \,=\, \left\{\,\left(\,V_{i},\, \Lambda_{i},\, v_{\,i}\,\right)\,\right\}_{\, i \,\in\, I},\, \Lambda^{\,\prime} \,=\, \left\{\,\left(\,V^{\,\prime}_{i},\, \Lambda^{\,\prime}_{i},\, v^{\,\prime}_{\,i}\,\right)\,\right\}_{\, i \,\in\, I}$\, be g-fusion Bessel sequences with bounds \,$B,\, D$, respectively in \,$H$\, and \,$\Gamma \,=\, \left\{\,\left(\,W_{j},\, \Gamma_{j},\, w_{\,j}\,\right)\,\right\}_{ j \,\in\, J}$,\, $\Gamma^{\,\prime} \,=\, \left\{\,\left(\,W^{\,\prime}_{j}, \,\Gamma^{\,\prime}_{j},\, w^{\,\prime}_{\,j}\,\right)\,\right\}_{ j \,\in\, J}$\, be g-fusion Bessel sequences with bounds \,$E,\, F$, respectively in \,$K$.\;Suppose \,$\left(\,T_{\Lambda},\, T_{\Lambda^{\,\prime}}\,\right)$\, and \,$\left(\,T_{\Gamma},\, T_{\Gamma^{\,\prime}}\,\right)$\, are their synthesis operators such that \,$T_{\Lambda^{\,\prime}}\,T^{\,\ast}_{\Lambda} \,=\, I_{H}$\, and \,$T_{\Gamma^{\,\prime}}\,T^{\,\ast}_{\Gamma} \,=\, I_{K}$.\;Then \,$\Lambda \,\otimes\, \Gamma \,=\, \left\{\,\left(\,V_{i} \,\otimes\, W_{j},\, \Lambda_{i} \,\otimes\, \Gamma_{j},\, v_{\,i}\,w_{\,j}\,\right)\,\right\}_{\,i,\,j}$\, and \,$\Lambda^{\,\prime} \,\otimes\, \Gamma^{\,\prime} \,=\, \left\{\,\left(\,V^{\,\prime}_{i} \,\otimes\, W^{\,\prime}_{j},\, \Lambda^{\,\prime}_{i} \,\otimes\, \Gamma^{\,\prime}_{j},\, v^{\,\prime}_{\,i}\,w^{\,\prime}_{\,j}\,\right)\,\right\}_{\,i,\,j}$\, are g-fusion frames for \,$H \,\otimes\, K$.   
\end{theorem}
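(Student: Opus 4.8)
The plan is to reduce the statement to Theorem \ref{th1.2} by first upgrading each of the four Bessel sequences $\Lambda,\,\Lambda^{\,\prime},\,\Gamma,\,\Gamma^{\,\prime}$ to a genuine $g$-fusion frame for its factor space, exploiting the duality identities $T_{\Lambda^{\,\prime}}\,T^{\,\ast}_{\Lambda} \,=\, I_{H}$ and $T_{\Gamma^{\,\prime}}\,T^{\,\ast}_{\Gamma} \,=\, I_{K}$.

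First I would handle $\Lambda$. Recall from Definition \ref{defn1} that $T^{\,\ast}_{\Lambda}\,(\,f\,) \,=\, \left\{\,v_{\,i}\,\Lambda_{i}\,P_{\,V_{i}}\,(\,f\,)\,\right\}_{i \,\in\, I}$, so that $\left\|\,T^{\,\ast}_{\Lambda}\,(\,f\,)\,\right\|^{\,2} \,=\, \sum\limits_{i \,\in\, I} v_{\,i}^{\,2}\,\left\|\,\Lambda_{i}\,P_{\,V_{i}}\,(\,f\,)\,\right\|_{\,1}^{\,2}$, and likewise for $\Lambda^{\,\prime}$. For any $f \,\in\, H$, using $I_{H} \,=\, T_{\Lambda^{\,\prime}}\,T^{\,\ast}_{\Lambda}$ together with the Cauchy--Schwarz inequality,
\[\|\,f\,\|_{\,1}^{\,2} \,=\, \left<\,T_{\Lambda^{\,\prime}}\,T^{\,\ast}_{\Lambda}\,(\,f\,) \,,\, f\,\right>_{\,1} \,=\, \left<\,T^{\,\ast}_{\Lambda}\,(\,f\,) \,,\, T^{\,\ast}_{\Lambda^{\,\prime}}\,(\,f\,)\,\right> \,\leq\, \left\|\,T^{\,\ast}_{\Lambda}\,(\,f\,)\,\right\|\,\left\|\,T^{\,\ast}_{\Lambda^{\,\prime}}\,(\,f\,)\,\right\|.\]
Since $\Lambda^{\,\prime}$ is a $g$-fusion Bessel sequence with bound $D$, we have $\left\|\,T^{\,\ast}_{\Lambda^{\,\prime}}\,(\,f\,)\,\right\|^{\,2} \,\leq\, D\,\|\,f\,\|_{\,1}^{\,2}$, and therefore $\|\,f\,\|_{\,1}^{\,4} \,\leq\, D\,\|\,f\,\|_{\,1}^{\,2}\,\left\|\,T^{\,\ast}_{\Lambda}\,(\,f\,)\,\right\|^{\,2}$, that is,
\[\frac{1}{D}\,\|\,f\,\|_{\,1}^{\,2} \,\leq\, \sum\limits_{i \,\in\, I} v_{\,i}^{\,2}\,\left\|\,\Lambda_{i}\,P_{\,V_{i}}\,(\,f\,)\,\right\|_{\,1}^{\,2} \,\leq\, B\,\|\,f\,\|_{\,1}^{\,2},\]
the right-hand inequality being the Bessel bound of $\Lambda$ itself. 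Hence $\Lambda$ is a $g$-fusion frame for $H$ with bounds $1/D$ and $B$.

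Next, taking adjoints in $T_{\Lambda^{\,\prime}}\,T^{\,\ast}_{\Lambda} \,=\, I_{H}$ yields $T_{\Lambda}\,T^{\,\ast}_{\Lambda^{\,\prime}} \,=\, I_{H}$, and repeating the above argument with the roles of $\Lambda$ and $\Lambda^{\,\prime}$ interchanged shows that $\Lambda^{\,\prime}$ is a $g$-fusion frame for $H$ with bounds $1/B$ and $D$. Applying the identical reasoning to $T_{\Gamma^{\,\prime}}\,T^{\,\ast}_{\Gamma} \,=\, I_{K}$ and its adjoint $T_{\Gamma}\,T^{\,\ast}_{\Gamma^{\,\prime}} \,=\, I_{K}$, I obtain that $\Gamma$ is a $g$-fusion frame for $K$ with bounds $1/F$ and $E$, and $\Gamma^{\,\prime}$ is a $g$-fusion frame for $K$ with bounds $1/E$ and $F$. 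Finally, since $\Lambda$ is a $g$-fusion frame for $H$ and $\Gamma$ is a $g$-fusion frame for $K$, Theorem \ref{th1.2} gives that $\Lambda \,\otimes\, \Gamma$ is a $g$-fusion frame for $H \,\otimes\, K$ (with bounds $1/(D\,F)$ and $B\,E$); and since $\Lambda^{\,\prime}$ and $\Gamma^{\,\prime}$ are $g$-fusion frames for $H$ and $K$, the same theorem gives that $\Lambda^{\,\prime} \,\otimes\, \Gamma^{\,\prime}$ is a $g$-fusion frame for $H \,\otimes\, K$ (with bounds $1/(B\,E)$ and $D\,F$). This completes the proof. The only genuine content is the Cauchy--Schwarz step that converts the dual identity $T_{\Lambda^{\,\prime}}\,T^{\,\ast}_{\Lambda} \,=\, I_{H}$ into a lower $g$-fusion frame bound for $\Lambda$; I anticipate no real obstacle, since once the four component families are recognized as $g$-fusion frames the conclusion is immediate from Theorem \ref{th1.2}.
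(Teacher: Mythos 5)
Your proof is correct, and its essential engine is the same as the paper's: the identity $\|f\|_1^2 \,=\, \left<\,T^{\,\ast}_{\Lambda}\,f \,,\, T^{\,\ast}_{\Lambda^{\,\prime}}\,f\,\right>$ followed by Cauchy--Schwarz and the Bessel bound of the partner sequence. The difference is organizational. The paper runs this estimate directly inside the tensor product: it writes $\|f\otimes g\|^4 \,=\, \left|\left<\,T^{\,\ast}_{\Lambda}f ,\, T^{\,\ast}_{\Lambda^{\,\prime}}f\,\right>_1\right|^2\left|\left<\,T^{\,\ast}_{\Gamma}g ,\, T^{\,\ast}_{\Gamma^{\,\prime}}g\,\right>_2\right|^2$, applies Cauchy--Schwarz and the bounds $D,\,F$ to the primed factors, and reassembles the two remaining sums into the single tensor-product frame expression for $\Lambda\otimes\Gamma$; it never asserts that the component families are themselves frames. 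You instead first upgrade each of $\Lambda,\,\Lambda^{\,\prime},\,\Gamma,\,\Gamma^{\,\prime}$ to a $g$-fusion frame in its own factor space (with lower bounds $1/D,\,1/B,\,1/F,\,1/E$, the swap being justified by taking adjoints in $T_{\Lambda^{\,\prime}}T^{\,\ast}_{\Lambda}=I_H$) and then invoke Theorem \ref{th1.2} as a black box. Your route is a little cleaner and yields strictly more information --- the frame property of the four component families is of independent interest, being exactly the statement that $\Lambda^{\,\prime}$ is a dual of $\Lambda$ in the factor space --- and your resulting bounds $1/(D F)$ and $B E$ for $\Lambda\otimes\Gamma$ agree with the paper's. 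The only point worth making explicit is that dividing by $\|f\|_1^2$ in the Cauchy--Schwarz step requires $f\neq 0$, the case $f=0$ being trivial; this is harmless.
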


\begin{proof}
Since \,$(\,\Lambda,\, \Lambda^{\,\prime}\,)$\, and \,$(\,\Gamma,\, \Gamma^{\,\prime}\,)$\, are \,$g$-fusion Bessel sequences in \,$H$\, and \,$K$, respectively, by Theorem (\ref{th1.2}), \,$\Lambda \,\otimes\, \Gamma$\, and \,$\Lambda^{\,\prime} \,\otimes\, \Gamma^{\,\prime}$\, are \,$g$-fusion Bessel sequences with bounds \,$B\,E$\, and \,$D\,F$, respectively in \,$H \,\otimes\, K$.\;Now, for each \,$f \,\otimes\, g \,\in\, H \,\otimes\, K$, 
\[\|\,f \,\otimes\, g\,\|^{\,4} \,=\, \left|\,\left<\,f \,\otimes\, g \,,\, f \,\otimes\, g\,\right>\,\right|^{\,2} \,=\, \left|\,\left<\,f \,,\, f\,\right>_{1}\,\right|^{\,2}\, \left|\,\left<\,g \,,\, g\,\right>_{2}\,\right|^{\,2}\]
\[ \,=\, \left|\,\left<\,T^{\,\ast}_{\Lambda}\,f \,,\, T^{\,\ast}_{\Lambda^{\,\prime}}\,f\,\right>_{1}\,\right|^{\,2}\, \left|\,\left<\,T^{\,\ast}_{\Gamma}\,g \,,\, T^{\,\ast}_{\Gamma^{\,\prime}}\,g\,\right>_{2}\,\right|^{\,2}\leq\, \left\|\,T^{\,\ast}_{\Lambda}\,f\,\right\|^{\,2}_{1}\,\left\|\,T^{\,\ast}_{\Lambda^{\,\prime}}\,f\,\right\|^{\,2}_{1}\,\left\|\,T^{\,\ast}_{\Gamma}\,g\,\right\|^{\,2}_{2}\,\left\|\,T^{\,\ast}_{\Gamma^{\,\prime}}\,g\,\right\|^{\,2}_{2}\]
\[=\, \left(\,\sum\limits_{\,i \,\in\, I}\, (\,v^{\,\prime}_{i}\,)^{\,2}\,\left\|\,\Lambda^{\,\prime}_{i}\,P_{\,V^{\,\prime}_{i}}\,(\,f\,) \,\right\|_{\,1}^{\,2}\,\right)\,\left(\,\sum\limits_{\,j \,\in\, J}\, (\,w^{\,\prime}_{j}\,)^{\,2}\, \left\|\,\Gamma^{\,\prime}_{j}\,P_{\,W^{\,\prime}_{j}}\,(\,g\,) \,\right\|_{\,2}^{\,2}\,\right)\,\times\hspace{2.2cm}\]
\[ \left(\,\sum\limits_{\,i \,\in\, I}\, v_{i}^{\,2} \,\left\|\,\Lambda_{i}\,P_{\,V_{i}}\,(\,f\,) \,\right\|_{\,1}^{\,2}\,\right)\,\left(\,\sum\limits_{\,j \,\in\, J}\, w_{j}^{\,2}\, \left\|\,\Gamma_{j}\,P_{\,W_{j}}\,(\,g\,)\,\right\|_{\,2}^{\,2}\,\right)\]
\[\leq\, D\,F\,\left\|\,f \,\right\|_{\,1}^{\,2}\,\left\|\,g \,\right\|_{\,2}^{\,2}\,\left(\,\sum\limits_{\,i \,\in\, I}\, v_{i}^{\,2} \,\left\|\,\Lambda_{i}\,P_{\,V_{i}}\,(\,f\,)\,\right\|_{\,1}^{\,2}\,\right)\,\left(\,\sum\limits_{\,j \,\in\, J}\, w_{j}^{\,2}\, \left\|\,\Gamma_{j}\,P_{\,W_{j}}\,(\,g\,) \,\right\|_{\,2}^{\,2}\,\right)\hspace{1cm}\]
\[\;[\;\text{since $\Lambda^{\,\prime},\,\Gamma^{\,\prime}$ are $g$-fusion Bessel sequences}\;]\] 
\[=\, D\,F\,\|\,f \,\otimes\, g\,\|^{\,2}\,\sum\limits_{i,\, j}\,v^{\,2}_{\,i}\,w^{\,2}_{\,j}\,\left\|\,\Lambda_{i}\,P_{\,V_{i}}\,(\,f\,) \,\right\|_{\,1}^{\,2}\,\left\|\,\Gamma_{j}\,P_{\,W_{j}}\,(\,g\,) \,\right\|_{\,2}^{\,2}\hspace{3.75cm}\]
\[=\, D\,F\,\|\,f \,\otimes\, g\,\|^{\,2}\,\sum\limits_{i,\, j}\,v^{\,2}_{\,i}\,w^{\,2}_{\,j}\,\left\|\,\Lambda_{i}\,P_{\,V_{i}}\,(\,f\,) \otimes\, \Gamma_{j}\,P_{\,W_{j}}\,(\,g\,) \,\right\|^{\,2}\hspace{4.2cm}\]
\[=\, D\,F\,\|\,f \,\otimes\, g\,\|^{\,2}\,\sum\limits_{i,\, j}\,v^{\,2}_{\,i}\,w^{\,2}_{\,j}\,\left\|\,\left(\,\Lambda_{i} \,\otimes\, \Gamma_{j}\,\right)\,P_{\,V_{i} \,\otimes\, W_{j}}\,(\,f \,\otimes\, g\,)\,\right\|^{\,2}\hspace{3.8cm}\]
\[\Rightarrow\, \dfrac{1}{D\,F}\,\|\,f \,\otimes\, g\,\|^{\,2} \,\leq\, \sum\limits_{i,\, j}\,v^{\,2}_{\,i}\,w^{\,2}_{\,j}\,\left\|\,\left(\,\Lambda_{i} \,\otimes\, \Gamma_{j}\,\right)\,P_{\,V_{i} \,\otimes\, W_{j}}\,(\,f \,\otimes\, g\,)\,\right\|^{\,2}.\hspace{2.7cm}\]
Thus, \,$\Lambda \,\otimes\, \Gamma$\, is a \,$g$-fusion frame for \,$H \,\otimes\, K$.\;Similarly, it can be shown that \,$\Lambda^{\,\prime} \,\otimes\, \Gamma^{\,\prime}$\, is also a \,$g$-fusion frame for \,$H \,\otimes\, K$.     
\end{proof}

\section{Frame operator for a pair of $g$-fusion Bessel sequences in tensor product of Hilbert spaces}

\smallskip\hspace{.6 cm}In this section, the frame operator for a pair of \,$g$-fusion Bessel sequences in \,$H \,\otimes\, K$\, is presented.

\begin{definition}
Let \,$\Lambda \,\otimes\, \Gamma \,=\, \left\{\,\left(\,V_{i} \,\otimes\, W_{j},\, \Lambda_{i} \,\otimes\, \Gamma_{j},\, v_{\,i}\,w_{\,j}\,\right)\,\right\}_{\,i,\,j}$\, and \,$\Lambda^{\,\prime} \,\otimes\, \Gamma^{\,\prime} \,=\, \left\{\,\left(\,V^{\,\prime}_{i} \,\otimes\, W^{\,\prime}_{j},\, \Lambda^{\,\prime}_{i} \,\otimes\, \Gamma^{\,\prime}_{j},\, v^{\,\prime}_{\,i}\,w^{\,\prime}_{\,j}\,\right)\,\right\}_{\,i,\,j}$\, be two g-fusion Bessel sequences in \,$H \,\otimes\, K$.\;Then the operator \,$S \,:\, H \,\otimes\, K \,\to\, H \,\otimes\, K$\, defined by for all \,$f \,\otimes\, g \,\in\, H \,\otimes\, K$,
\[S\,(\,f \,\otimes\, g\,) \,=\, \sum\limits_{i,\, j}\,v_{\,i}\,w_{\,j}\,v^{\,\prime}_{\,i}\,w^{\,\prime}_{\,j}\;P_{\,V_{i} \,\otimes\, W_{j}}\,\left(\,\Lambda_{i} \,\otimes\, \Gamma_{j}\,\right)^{\,\ast}\,\left(\,\Lambda^{\,\prime}_{i} \,\otimes\, \Gamma^{\,\prime}_{j}\,\right)\,P_{\,V^{\,\prime}_{i} \,\otimes\, W^{\,\prime}_{j}}\,(\,f \,\otimes\, g\,)\]is called the frame operator for the pair of g-fusion Bessel sequences \,$\Lambda \,\otimes\, \Gamma$\, and \,$\Lambda^{\,\prime} \,\otimes\, \Gamma^{\,\prime}$. 
\end{definition}

\begin{theorem}
Let \,$S_{\Lambda\,\Lambda^{\,\prime}}$\, and \,$S_{\Gamma\,\Gamma^{\,\prime}}$\, be the frame operators for the pair of g-fusion Bessel sequences \,$\left(\,\Lambda \,=\, \left\{\,\left(\,V_{i},\, \Lambda_{i},\, v_{\,i}\,\right)\,\right\}_{\, i \,\in\, I},\, \Lambda^{\,\prime} \,=\, \left\{\,\left(\,V^{\,\prime}_{i},\, \Lambda^{\,\prime}_{i},\, v^{\,\prime}_{\,i}\,\right)\,\right\}_{\, i \,\in\, I}\,\right)$\, and \,$\left(\,\Gamma \,=\, \left\{\,\left(\,W_{j},\, \Gamma_{j},\, w_{\,j}\,\right)\,\right\}_{ j \,\in\, J},\, \Gamma^{\,\prime} \,=\, \left\{\,\left(\,W^{\,\prime}_{j},\, \Gamma^{\,\prime}_{j},\, w^{\,\prime}_{\,j}\,\right)\,\right\}_{ j \,\in\, J}\,\right)$\, in \,$H$\, and \,$K$, respectively.\;Then \,$S \,=\, S_{\Lambda\,\Lambda^{\,\prime}} \,\otimes\, S_{\Gamma\,\Gamma^{\,\prime}}$.  
\end{theorem}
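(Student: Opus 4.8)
The plan is to verify the operator identity $S = S_{\Lambda\,\Lambda^{\,\prime}} \,\otimes\, S_{\Gamma\,\Gamma^{\,\prime}}$ by evaluating both sides on an arbitrary elementary tensor $f \,\otimes\, g \,\in\, H \,\otimes\, K$ and invoking density (linearity and boundedness) to conclude equality on all of $H \,\otimes\, K$. Since both $S$ and $S_{\Lambda\,\Lambda^{\,\prime}} \,\otimes\, S_{\Gamma\,\Gamma^{\,\prime}}$ are bounded operators, it suffices to check agreement on the spanning set $\{\,f \,\otimes\, g\,\}$. This is the same template used in the proof that $S_{\Lambda \,\otimes\, \Gamma} \,=\, S_{\Lambda} \,\otimes\, S_{\Gamma}$ earlier in the paper.

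First I would start from the definition of $S$ applied to $f \,\otimes\, g$, namely
\[S\,(\,f \,\otimes\, g\,) \,=\, \sum\limits_{i,\, j}\,v_{\,i}\,w_{\,j}\,v^{\,\prime}_{\,i}\,w^{\,\prime}_{\,j}\;P_{\,V_{i} \,\otimes\, W_{j}}\,\left(\,\Lambda_{i} \,\otimes\, \Gamma_{j}\,\right)^{\,\ast}\,\left(\,\Lambda^{\,\prime}_{i} \,\otimes\, \Gamma^{\,\prime}_{j}\,\right)\,P_{\,V^{\,\prime}_{i} \,\otimes\, W^{\,\prime}_{j}}\,(\,f \,\otimes\, g\,).\]
Then I would rewrite each tensor-product factor: $P_{\,V^{\,\prime}_{i} \,\otimes\, W^{\,\prime}_{j}} \,=\, P_{\,V^{\,\prime}_{i}} \,\otimes\, P_{\,W^{\,\prime}_{j}}$ (the Note in Section 3), $\left(\,\Lambda^{\,\prime}_{i} \,\otimes\, \Gamma^{\,\prime}_{j}\,\right)$ stays as is, $\left(\,\Lambda_{i} \,\otimes\, \Gamma_{j}\,\right)^{\,\ast} \,=\, \Lambda_{i}^{\,\ast} \,\otimes\, \Gamma_{j}^{\,\ast}$ by part (V) of Theorem \ref{th1.1}, and $P_{\,V_{i} \,\otimes\, W_{j}} \,=\, P_{\,V_{i}} \,\otimes\, P_{\,W_{j}}$. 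Next, repeated use of part (III) of Theorem \ref{th1.1} collapses the composition of four tensor-product operators into a single tensor product $\left(\,P_{\,V_{i}}\,\Lambda_{i}^{\,\ast}\,\Lambda^{\,\prime}_{i}\,P_{\,V^{\,\prime}_{i}}\,\right) \,\otimes\, \left(\,P_{\,W_{j}}\,\Gamma_{j}^{\,\ast}\,\Gamma^{\,\prime}_{j}\,P_{\,W^{\,\prime}_{j}}\,\right)$, and part (II) lets me push through to $f$ and $g$ separately, giving the summand
\[v_{\,i}\,v^{\,\prime}_{\,i}\,P_{\,V_{i}}\,\Lambda_{i}^{\,\ast}\,\Lambda^{\,\prime}_{i}\,P_{\,V^{\,\prime}_{i}}\,(\,f\,) \;\otimes\; w_{\,j}\,w^{\,\prime}_{\,j}\,P_{\,W_{j}}\,\Gamma_{j}^{\,\ast}\,\Gamma^{\,\prime}_{j}\,P_{\,W^{\,\prime}_{j}}\,(\,g\,).\]
Finally, since the double sum over $(i,j)$ factors as a product of the sum over $i$ and the sum over $j$ (using bilinearity of $\otimes$ and absolute convergence, which follows from the Bessel bounds together with Cauchy--Schwarz), the total equals $S_{\Lambda\,\Lambda^{\,\prime}}\,(\,f\,) \,\otimes\, S_{\Gamma\,\Gamma^{\,\prime}}\,(\,g\,) \,=\, \left(\,S_{\Lambda\,\Lambda^{\,\prime}} \,\otimes\, S_{\Gamma\,\Gamma^{\,\prime}}\,\right)\,(\,f \,\otimes\, g\,)$, where I use the definition of the frame operator for a pair of g-fusion Bessel sequences in a single Hilbert space.

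I do not expect a genuine obstacle here; the argument is essentially bookkeeping with the algebraic identities of Theorem \ref{th1.1} and the projection identity $P_{\,V \otimes W} = P_V \otimes P_W$. The one point deserving a word of care is the interchange of the double summation with the tensor product: one should note that each of $\sum_i v_i v^{\,\prime}_i P_{V_i}\Lambda_i^{\,\ast}\Lambda^{\,\prime}_i P_{V^{\,\prime}_i}(f)$ and $\sum_j w_j w^{\,\prime}_j P_{W_j}\Gamma_j^{\,\ast}\Gamma^{\,\prime}_j P_{W^{\,\prime}_j}(g)$ converges (because $\Lambda,\Lambda^{\,\prime}$ and $\Gamma,\Gamma^{\,\prime}$ are g-fusion Bessel, so the relevant operators are well-defined and bounded), whence the unordered double sum defining $S(f\otimes g)$ converges and may be rearranged as the tensor product of the two single sums. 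Once that is observed, the identity on elementary tensors extends by linearity and boundedness to all of $H \otimes K$, completing the proof.
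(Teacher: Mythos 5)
Your proposal is correct and follows essentially the same route as the paper: both expand $S(f\otimes g)$ on elementary tensors, factor each tensor-product operator via $P_{V_i\otimes W_j}=P_{V_i}\otimes P_{W_j}$ and Theorem \ref{th1.1}, split the double sum into a product of single sums, and identify the result as $\left(S_{\Lambda\Lambda^{\prime}}\otimes S_{\Gamma\Gamma^{\prime}}\right)(f\otimes g)$. Your added remarks on convergence and extension by density are sound refinements of details the paper leaves implicit.
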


\begin{proof}
Since \,$S$\, is the associated frame operator for the pair of g-fusion Bessel sequences \,$\Lambda \,\otimes\, \Gamma$\, and \,$\Lambda^{\,\prime} \,\otimes\, \Gamma^{\,\prime}$, for all \,$f \,\otimes\, g \,\in\, H \,\otimes\, K$, we have
\[S\,(\,f \,\otimes\, g\,) \,=\, \sum\limits_{i,\, j}\,v_{\,i}\,w_{\,j}\,v^{\,\prime}_{\,i}\,w^{\,\prime}_{\,j}\;P_{\,V_{i} \,\otimes\, W_{j}}\,\left(\,\Lambda_{i} \,\otimes\, \Gamma_{j}\,\right)^{\,\ast}\,\left(\,\Lambda^{\,\prime}_{i} \,\otimes\, \Gamma^{\,\prime}_{j}\,\right)\,P_{\,V^{\,\prime}_{i} \,\otimes\, W^{\,\prime}_{j}}\,(\,f \,\otimes\, g\,)\]
\[=\, \sum\limits_{i,\, j}\,v_{\,i}\,w_{\,j}\,v^{\,\prime}_{\,i}\,w^{\,\prime}_{\,j}\,\left(\,P_{\,V_{i}} \,\otimes\, P_{\,W_{j}}\,\right)\,\left(\,\Lambda_{i}^{\,\ast} \,\otimes\, \Gamma_{j}^{\,\ast}\,\right)\,\left(\,\Lambda^{\,\prime}_{i} \,\otimes\, \Gamma^{\,\prime}_{j}\,\right)\,\left(\,P_{\,V^{\,\prime}_{i}} \,\otimes\, P_{\,W^{\,\prime}_{j}}\,\right)\,(\,f \,\otimes\, g\,)\]
\[=\, \left(\,\sum\limits_{\,i \,\in\, I}\,v_{\,i}\,v^{\,\prime}_{\,i}\,P_{\,V_{i}}\,\Lambda_{i}^{\,\ast}\,\Lambda_{i}^{\,\prime} \,P_{\,V^{\,\prime}_{i}}\,(\,f\,)\,\right) \,\otimes\, \left(\,\sum\limits_{\,j \,\in\, J}\,w_{\,j}\,w^{\,\prime}_{\,j}\,P_{\,W_{j}}\,\Gamma_{j}^{\,\ast}\,\Gamma_{j}^{\,\prime}\,P_{\,W^{\,\prime}_{j}}\,(\,g\,)\,\right)\hspace{2.7cm}\]
\[=\, S_{\Lambda\,\Lambda^{\,\prime}}\,(\,f\,) \,\otimes\, S_{\Gamma\,\Gamma^{\,\prime}}\,(\,g\,) \,=\, \left(\,S_{\Lambda\,\Lambda^{\,\prime}} \,\otimes\, S_{\Gamma\,\Gamma^{\,\prime}}\,\right)\,(\,f \,\otimes\, g\,).\hspace{5cm}\]
This shows that \,$S \,=\, S_{\Lambda\,\Lambda^{\,\prime}} \,\otimes\, S_{\Gamma\,\Gamma^{\,\prime}}$.   
\end{proof}

\begin{theorem}
The frame operator for the pair of g-fusion Bessel sequences in \,$H \,\otimes\, K$\, is bounded.  
\end{theorem}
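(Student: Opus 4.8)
The plan is to show that the frame operator $S$ for the pair of $g$-fusion Bessel sequences $\Lambda \otimes \Gamma$ and $\Lambda^{\,\prime} \otimes \Gamma^{\,\prime}$ is bounded by exploiting the factorization $S \,=\, S_{\Lambda\,\Lambda^{\,\prime}} \,\otimes\, S_{\Gamma\,\Gamma^{\,\prime}}$ established in the previous theorem. Since $(I)$ of Theorem (\ref{th1.1}) tells us that $\left\|\,S_{\Lambda\,\Lambda^{\,\prime}} \,\otimes\, S_{\Gamma\,\Gamma^{\,\prime}}\,\right\| \,=\, \left\|\,S_{\Lambda\,\Lambda^{\,\prime}}\,\right\|\;\left\|\,S_{\Gamma\,\Gamma^{\,\prime}}\,\right\|$, it is enough to bound each of the component operators $S_{\Lambda\,\Lambda^{\,\prime}}$ and $S_{\Gamma\,\Gamma^{\,\prime}}$ separately. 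So the real work reduces to the one-space statement: the frame operator $S_{\Lambda\,\Lambda^{\,\prime}}$ for a pair of $g$-fusion Bessel sequences $\Lambda$ (bound $B$) and $\Lambda^{\,\prime}$ (bound $D$) in $H$ is bounded, with a norm estimate in terms of $B$ and $D$.

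For the component estimate, I would argue directly from the definition $S_{\Lambda\,\Lambda^{\,\prime}}\,(\,f\,) \;=\; \sum_{i \,\in\, I}\,v_{\,i}\,v^{\,\prime}_{\,i}\,P_{\,V_{i}}\,\Lambda_{i}^{\,\ast}\,\Lambda_{i}^{\,\prime} \,P_{\,V^{\,\prime}_{i}}\,(\,f\,)$. Testing against an arbitrary $h \in H$ and applying Cauchy--Schwarz termwise gives
\[
\left|\,\left<\,S_{\Lambda\,\Lambda^{\,\prime}}\,f \,,\, h\,\right>_{1}\,\right| \,\leq\, \sum\limits_{i \,\in\, I}\, v_{\,i}\,v^{\,\prime}_{\,i}\,\left\|\,\Lambda^{\,\prime}_{i}\,P_{\,V^{\,\prime}_{i}}\,f\,\right\|_{1}\,\left\|\,\Lambda_{i}\,P_{\,V_{i}}\,h\,\right\|_{1},
\]
where I have used that $P_{\,V_i}$ is self-adjoint and $\left<\,\Lambda_i^{\ast}\Lambda_i^{\prime}P_{V_i^{\prime}}f,\,P_{V_i}h\,\right>_1 = \left<\,\Lambda_i^{\prime}P_{V_i^{\prime}}f,\,\Lambda_i P_{V_i}h\,\right>_1$. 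A second application of the Cauchy--Schwarz inequality over the index set $I$ then bounds the right side by $\left(\sum_{i} (v^{\,\prime}_{i})^{\,2}\|\Lambda^{\,\prime}_{i}P_{\,V^{\,\prime}_{i}}f\|_{1}^{2}\right)^{1/2}\left(\sum_{i} v_{i}^{2}\|\Lambda_{i}P_{\,V_{i}}h\|_{1}^{2}\right)^{1/2} \leq \sqrt{D}\,\|f\|_{1}\,\sqrt{B}\,\|h\|_{1}$, using the Bessel bounds of $\Lambda^{\,\prime}$ and $\Lambda$. Taking the supremum over unit $h$ shows $\left\|\,S_{\Lambda\,\Lambda^{\,\prime}}\,\right\| \,\leq\, \sqrt{B\,D}$; symmetrically $\left\|\,S_{\Gamma\,\Gamma^{\,\prime}}\,\right\| \,\leq\, \sqrt{E\,F}$ with $E,F$ the Bessel bounds for $\Gamma,\Gamma^{\,\prime}$.

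Combining the two gives $\left\|\,S\,\right\| \,=\, \left\|\,S_{\Lambda\,\Lambda^{\,\prime}}\,\right\|\,\left\|\,S_{\Gamma\,\Gamma^{\,\prime}}\,\right\| \,\leq\, \sqrt{B\,D\,E\,F} \,<\, \infty$, so $S$ is bounded; one should also note linearity of $S$ is immediate from the definition (each summand is linear and the series converges absolutely by the estimate just derived), which legitimizes calling it a bounded operator. I do not expect a serious obstacle here: the only mild subtlety is justifying that the defining series for $S_{\Lambda\,\Lambda^{\,\prime}}$ converges in $H$ and defines a bounded linear map before one is entitled to compute its norm — but the very same double Cauchy--Schwarz estimate handles this, since it shows the partial sums are Cauchy (the tails are controlled by tails of the convergent Bessel sums). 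Alternatively, and even more cleanly, one can quote that $S_{\Lambda\,\Lambda^{\,\prime}} \,=\, T_{\Lambda}\,T_{\Lambda^{\,\prime}}^{\,\ast}$ is a composition of bounded operators (the analysis operator $T_{\Lambda^{\,\prime}}^{\,\ast}$ has norm $\leq \sqrt{D}$ and the synthesis operator $T_{\Lambda}$ has norm $\leq \sqrt{B}$), hence bounded with $\left\|\,S_{\Lambda\,\Lambda^{\,\prime}}\,\right\| \,\leq\, \sqrt{B\,D}$, and then invoke the tensor norm identity as before; I would present whichever of these two routes is shorter given what the paper has already set up.
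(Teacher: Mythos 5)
Your proof is correct, and it rests on the same two pillars as the paper's argument: the factorization $S \,=\, S_{\Lambda\,\Lambda^{\,\prime}} \,\otimes\, S_{\Gamma\,\Gamma^{\,\prime}}$ combined with the norm identity $\left\|\,Q \,\otimes\, T\,\right\| \,=\, \|\,Q\,\|\,\|\,T\,\|$ of Theorem (\ref{th1.1}), and a double Cauchy--Schwarz estimate against the Bessel bounds. The difference is purely one of ordering. The paper runs Cauchy--Schwarz at the tensor level first, showing $\left|\,\left<\,S\,(\,f \otimes g\,) \,,\, f_{1} \otimes g_{1}\,\right>\,\right| \,\leq\, \sqrt{B_{1}\,B_{2}}\,\|\,f \otimes g\,\|\,\|\,f_{1} \otimes g_{1}\,\|$ for elementary tensors, with $B_{1} \,=\, B\,E$ and $B_{2} \,=\, D\,F$ the Bessel bounds of $\Lambda \otimes \Gamma$ and $\Lambda^{\,\prime} \otimes \Gamma^{\,\prime}$, and only afterwards descends to the component operators by splitting this inequality and taking suprema separately in $f_{1},\, g_{1},\, f,\, g$ to reach $\|\,S\,\| \,=\, \|\,S_{\Lambda\,\Lambda^{\,\prime}}\,\|\,\|\,S_{\Gamma\,\Gamma^{\,\prime}}\,\| \,\leq\, \sqrt{B_{1}\,B_{2}}$. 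You go in the opposite direction: you establish $\|\,S_{\Lambda\,\Lambda^{\,\prime}}\,\| \,\leq\, \sqrt{B\,D}$ and $\|\,S_{\Gamma\,\Gamma^{\,\prime}}\,\| \,\leq\, \sqrt{E\,F}$ in the component spaces and then tensor the bounds, arriving at the same constant $\sqrt{B\,D\,E\,F} \,=\, \sqrt{B_{1}\,B_{2}}$. Your ordering is arguably the cleaner one: the paper's tensor-level estimate is proved only on elementary tensors, and passing from those to arbitrary elements of $H \otimes K$ ultimately requires the factorization through $S_{\Lambda\,\Lambda^{\,\prime}} \otimes S_{\Gamma\,\Gamma^{\,\prime}}$ anyway, which is exactly the step you make primary; the same remark disposes of your worry about convergence of the defining series, since each component operator is realized as a bounded map. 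Your closing observation that $S_{\Lambda\,\Lambda^{\,\prime}} \,=\, T_{\Lambda}\,T_{\Lambda^{\,\prime}}^{\,\ast}$ yields the component bound immediately from $\|\,T_{\Lambda}\,\| \,\leq\, \sqrt{B}$ and $\|\,T_{\Lambda^{\,\prime}}^{\,\ast}\,\| \,\leq\, \sqrt{D}$ is also valid and is the shortest route of all, though it is not the one the paper takes.
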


\begin{proof}
Let \,$S$\, be the corresponding frame operator for the pair of g-fusion Bessel sequences \,$\Lambda \,\otimes\, \Gamma$\, and \,$\Lambda^{\,\prime} \,\otimes\, \Gamma^{\,\prime}$\, with bounds \,$B_{1}$\, and \,$B_{2}$\, in \,$H \,\otimes\, K$.\;Now, for \,$f \,\otimes\, g,\, f_{\,1} \,\otimes\, g_{\,1} \,\in\, H \,\otimes\, K$, we have \[\left<\,S\,(\,f \,\otimes\, g\,) \;,\; f_{\,1} \,\otimes\, g_{\,1}\,\right> \]
\[\,=\, \left<\,\sum\limits_{i,\, j}\,v_{\,i}\,w_{\,j}\,v^{\,\prime}_{\,i}\,w^{\,\prime}_{\,j}\;P_{\,V_{i} \,\otimes\, W_{j}}\,\left(\,\Lambda_{i} \,\otimes\, \Gamma_{j}\,\right)^{\,\ast}\,\left(\,\Lambda^{\,\prime}_{i} \,\otimes\, \Gamma^{\,\prime}_{j}\,\right)\,P_{\,V^{\,\prime}_{i} \,\otimes\, W^{\,\prime}_{j}}\,(\,f \,\otimes\, g\,) \;,\; f_{\,1} \,\otimes\, g_{\,1}\,\right>\]
\[=\, \sum\limits_{i,\, j}\,v_{\,i}\,w_{\,j}\,v^{\,\prime}_{\,i}\,w^{\,\prime}_{\,j}\,\left<\,P_{\,V_{i}}\,\Lambda_{i}^{\,\ast}\,\Lambda_{i}^{\,\prime}\,P_{\,V^{\,\prime}_{i}}\,(\,f\,) \,\otimes\, P_{\,W_{j}}\,\Gamma_{j}^{\,\ast}\,\Gamma_{j}^{\,\prime}\,P_{\,W^{\,\prime}_{j}}\,(\,g\,) \;,\; f_{\,1} \,\otimes\, g_{\,1}\,\right>\hspace{2.7cm}\]
\[=\, \left(\,\sum\limits_{\,i \,\in\, I}\,v_{\,i}\,v^{\,\prime}_{\,i}\,\left<\,P_{\,V_{i}}\,\Lambda_{i}^{\,\ast}\,\Lambda_{i}^{\,\prime}\,P_{\,V^{\,\prime}_{i}}\,\,(\,f\,) \,,\, f_{\,1}\,\right>_{\,1}\,\right)\,\left(\,\sum\limits_{\,j \,\in\, J}\,w_{\,j}\,w^{\,\prime}_{\,j}\,\left<\,P_{\,W_{j}}\,\Gamma_{j}^{\,\ast}\,\Gamma_{j}^{\,\prime}
\,P_{\,W^{\,\prime}_{j}}\,(\,g\,) \,,\, g_{\,1}\,\right>_{\,2}\,\right).\]
Then by Cauchy-Schwartz inequality, 
\[\left|\,\left<\,S\,(\,f \,\otimes\, g\,) \;,\; f_{\,1} \,\otimes\, g_{\,1}\,\right>\,\right|\]
\[\,\leq\, \left(\,\sum\limits_{\,i \,\in\, I}\,\,(\,v^{\,\prime}_{\,i}\,)^{\,2}\,\left\|\,\Lambda_{i}^{\,\prime}\,P_{\,V^{\,\prime}_{i}}\,\,(\,f\,)\,\right\|_{\,1}^{\,2}\,\right)^{\,\dfrac{1}{2}}\,\left(\,\sum\limits_{\,i \,\in\, I}\,\,v_{\,i}^{\,2}\,\left\|\,\Lambda_{i}\,P_{\,V_{i}}\,\,(\,f_{\,1}\,)\,\right\|_{\,1}^{\,2}\,\right)^{\,\dfrac{1}{2}}\,\times\hspace{3cm}\]
\[\hspace{2.3cm}\left(\,\sum\limits_{\,j \,\in\, J}\,(\,w^{\,\prime}_{\,j}\,)^{\,2}\,\left\|\,\Gamma_{j}^{\,\prime}\,P_{\,W^{\,\prime}_{j}}\,\,(\,g\,)\,\right\|_{\,2}^{\,2}\,\right)^{\,\dfrac{1}{2}}\,\left(\,\sum\limits_{\,j \,\in\, J}\,w_{\,j}^{\,2}\,\left\|\,\Gamma_{j}\,P_{\,W_{j}}\,\,(\,g_{\,1}\,)\,\right\|_{\,2}^{\,2}\,\right)^{\,\dfrac{1}{2}}\]
\[=\, \left(\,\sum\limits_{\,i,\, j }\,(\,v^{\,\prime}_{i}\,)^{\,2}\,(\,w^{\,\prime}_{j}\,)^{\,2}\,\left\|\,\Lambda_{i}^{\,\prime}\,P_{\,V^{\,\prime}_{i}}\,\,(\,f\,)\,\right\|_{1}^{\,2}\,\left\|\,\Gamma_{j}^{\,\prime}\,P_{\,W^{\,\prime}_{j}}\,(\,g\,)\,\right\|_{2}^{\,2}\,\right)^{\,\dfrac{1}{2}}\,\times\hspace{3.3cm}\]
\[\,\left(\,\sum\limits_{\,i,\, j }\,v_{i}^{\,2}\,w_{j}^{\,2}\,\left\|\,\Lambda_{i}\,P_{\,V_{i}}\,(\,f_{\,1}\,)\,\right\|_{1}^{\,2}\,\left\|\,\Gamma_{j}\,P_{\,W_{j}}\,(\,g_{\,1}\,)\,\right\|_{2}^{\,2}\,\right)^{\,\dfrac{1}{2}}\]
\[=\, \left(\,\sum\limits_{i,\, j}\,(\,v^{\,\prime}_{\,i}\,)^{\,2}\,(\,w^{\,\prime}_{j}\,)^{\,2}\,\left\|\,\left(\,\Lambda^{\,\prime}_{i} \,\otimes\, \Gamma^{\,\prime}_{j}\,\right)\,P_{\,V^{\,\prime}_{i} \,\otimes\, W^{\,\prime}_{j}}\,(\,f \,\otimes\, g\,)\,\right\|^{\,2}\,\right)^{\,\dfrac{1}{2}} \,\times\hspace{3.2cm}\]
\[\hspace{2.7cm}\,\left(\,\sum\limits_{i,\, j}\,v^{\,2}_{\,i}\,w^{\,2}_{\,j}\,\left\|\,\left(\,\Lambda_{i} \,\otimes\, \Gamma_{j}\,\right)\,P_{\,V_{i} \,\otimes\, W_{j}}\,(\,f_{\,1} \,\otimes\, g_{\,1}\,)\,\right\|^{\,2}\,\right)^{\,\dfrac{1}{2}}\]
\[\;[\;\text{using (\ref{eq1.001}), (\ref{eq1.0001}) and applying the Theorem (\ref{th1.1})}\;]\]
\[\leq\, \sqrt{B_{\,1}\,B_{\,2}}\;\left\|\,f \,\otimes\, g\,\right\|\,\left\|\,f_{\,1} \,\otimes\, g_{\,1}\,\right\|.\hspace{8cm}\]
Let \,$S_{\Lambda\,\Lambda^{\,\prime}}$\, and \,$S_{\Gamma\,\Gamma^{\,\prime}}$\, be the frame operators for the pair of \,$g$-fusion Bessel sequences \,$(\,\Lambda,\, \Lambda^{\,\prime}\,)$\, and \,$(\,\Gamma,\, \Gamma^{\,\prime}\,)$, respectively.\;Then by above calculation
\[\left|\,\left<\,\left(\,S_{\Lambda\,\Lambda^{\,\prime}} \,\otimes\, S_{\Gamma\,\Gamma^{\,\prime}}\,\right)\,(\,f \,\otimes\, g\,) \;,\; f_{\,1} \,\otimes\, g_{\,1}\,\right>\,\right| \,\leq\, \sqrt{B_{\,1}\,B_{\,2}}\;\left\|\,f \,\otimes\, g\,\right\|\,\left\|\,f_{\,1} \,\otimes\, g_{\,1}\,\right\|\]
\[\Rightarrow\, \left|\,\left<\,S_{\Lambda\,\Lambda^{\,\prime}}\,(\,f\,) \,,\, f_{\,1}\,\right>_{1}\,\right|\, \left|\,\left<\,S_{\Gamma\,\Gamma^{\,\prime}}\,(\,g\,) \,,\, g_{\,1}\,\right>_{2}\,\right| \,\leq\, \sqrt{B_{\,1}\,B_{\,2}}\;\|\,f\,\|_{1}\;\|\,f_{\,1}\,\|_{1}\;\|\,g\,\|_{2}\;\|\,g_{\,1}\,\|_{2}\hspace{1cm}\]
\[\Rightarrow\, \sup\limits_{\,\|\,f_{\,1}\,\|_{1} \,=\, 1}\;\left|\,\left<\,S_{\Lambda\,\Lambda^{\,\prime}}\,(\,f\,) \,,\, f_{\,1}\,\right>_{1}\,\right|\,\sup\limits_{\,\|\,g_{\,1}\,\|_{2} \,=\, 1}\;\left|\,\left<\,S_{\Gamma\,\Gamma^{\,\prime}}\,(\,g\,) \,,\, g_{\,1}\,\right>_{2}\,\right| \,\leq\, \sqrt{B_{\,1}\,B_{\,2}}\;\|\,f\,\|_{1}\;\|\,g\,\|_{2}\]
\[\Rightarrow\, \left\|\,S_{\Lambda\,\Lambda^{\,\prime}}\,(\,f\,)\,\right\|_{1}\;\left\|\,S_{\Gamma\,\Gamma^{\,\prime}}\,(\,g\,)\,\right\|_{2} \,\leq\, \sqrt{B_{\,1}\,B_{\,2}}\;\|\,f\,\|_{1}\;\|\,g\,\|_{2}\]
\[\Rightarrow\, \dfrac{\left\|\,S_{\Lambda\,\Lambda^{\,\prime}}\,(\,f\,)\,\right\|_{1}}{\|\,f\,\|_{1}}\;\dfrac{\left\|\,S_{\Gamma\,\Gamma^{\,\prime}}\,(\,g\,)\,\right\|_{2}}{\|\,g\,\|_{2}} \,\leq\, \sqrt{B_{\,1}\,B_{\,2}}.\hspace{1.8cm}\]
Again taking supremum on both side with respect to \,$\|\,f\,\|_{\,1} \,=\, 1$\, and \,$\|\,g\,\|_{\,2} \,=\, 1$,
\[\|\,S\,\| \,=\, \left\|\,S_{\Lambda\,\Lambda^{\,\prime}} \,\otimes\, S_{\Gamma\,\Gamma^{\,\prime}}\,\right\| \,=\, \left\|\,S_{\Lambda\,\Lambda^{\,\prime}}\,\right\|\,\left\|\,S_{\Gamma\,\Gamma^{\,\prime}}\,\right\| \,\leq\, \sqrt{B_{\,1}\,B_{\,2}}.\]
This completes the proof.  
\end{proof}


\begin{thebibliography}{0} 



\bibitem{Asgari}Asgari M.\,S., Khosravi A.,
\emph{Frames and bases of subspaces in Hilbert spaces}, J. Math. Anal. Appl. 308 (2005) 541-553.

\bibitem{Kutyniok}
Casazza P., Kutyniok G.,
\emph{Frames of subspaces}, Cotemporary Math, AMS 345 (2004), 87-114.

\bibitem{O}Christensen O., 
\emph{An introduction to frames and Riesz bases}, Birkhauser (2008).

\bibitem{Daubechies}
Daubechies I., Grossmann A, Mayer Y., 
\emph{Painless nonorthogonal expansions}, Journal of Mathematical Physics 27 (5) (1986) 1271-1283.

\bibitem{Duffin}Duffin R. J., Schaeffer A. C., 
\emph{A class of nonharmonic Fourier series}, Trans. Amer. Math. Soc., 72, (1952), 341-366.

\bibitem{Folland}Folland G. B.,  
\emph{A Course in abstract harmonic analysis}, CRC Press BOCA Raton, Florida.

\bibitem{Gavruta}Gavruta P.,  
\emph{On the duality of fusion frames}, J. Math. Anal. Appl. 333 (2007) 871-879.

\bibitem{P}Ghosh P., Samanta T. K.,
\emph{Stability of dual g-fusion frame in Hilbert spaces}, Method of Functional Analysis and Topology, Vol. 26, no. 3, pp. 227-240.

\bibitem{Ghosh}Ghosh P., Samanta T. K.,
\emph{Generalized atomic subspaces for operators in Hilbert spaces}, Mathematica Bohemica, Accepted.

\bibitem{Jain}Jain P. K., Ahuja Om P., 
\emph{Functional Analysis}, New Age International Publisher, 1995.

\bibitem{Kadison}Kadison R. V. and Ringrose J. R.,  
\emph{Fundamentals of the theory of operator algebras}, Vol. I, Academic Press, New York 1983.
 
\bibitem{A}Khosravi Amir, Asgari M. S.,  
\emph{Frames and Bases in Tensor Product of Hilbert spaces}, Intern. Math. Journal, Vol. 4, 2003, no. 6, 527-537.

\bibitem{Mir}Khosravi Amir, Azandaryani M. Mirzaee,
\emph{Fusion frames and \,$g$-frames in tensor product and direct sum of Hilbert spaces}, Appl. Anal. Discrete Math. 6 (2012), 287-303. 
 
\bibitem{S}Rabinson S., 
\emph{Hilbert space and tensor products}, Lecture notes September 8, 1997.

\bibitem{Ahmadi}Sadri V., Rahimlou Gh., Ahmadi R. and Farfar R. Zarghami, 
\emph{Generalized Fusion Frames in Hilbert Spaces}, arXiv: 1806.03598v1, Submitted (2018). 

\bibitem{Sun}Sun W., 
\emph{G-frames and G-Riesz bases}, Journal of Mathematical Analysis and Applications 322 (1) (2006), 437-452.

\bibitem{Upender}Reddy G. Upender, Reddy N. Gopal, Reddy B.\,Krishna, 
\emph{Frame operator and Hilbert-Schmidt Operator in Tensor Product of Hilbert Spaces}, Journal of Dynamical Systems and Geometric Theories, 7:1, (2009), 61-70.





\end{thebibliography}
\end{document}